\newtheorem{theorem}{Theorem}[section]
\newtheorem{lemma}[theorem]{Lemma}
\newtheorem{corollary}[theorem]{Corollary}
\newtheorem{proposition}[theorem]{Proposition}
\newtheorem{assumption}[theorem]{Assumption}
\def\N{\mathbb{N}}
\def\Z{\mathbb{Z}}
\def\into{\longrightarrow}
\def\sub{\subseteq}
\def\lbrak{\left[}
\def\rbrak{\right]}
\def\lpar{\left(}
\def\rpar{\right)}
\def\setdiff{\backslash}
\def\N{{\mathbb N}}
\def\Z{{\mathbb Z}}
\def\F{\mathbb{F}}
\def\scA{\mathcal{A}}
\def\scK{\mathcal{K}}
\def\scKii{\widetilde{\mathcal{K}}}
\def\scI{\mathcal {I}}
\def\genset{\mathcal{X}}
\def\scY{\mathcal{Y}}
\def\scS{\mathcal{S}}
\def\A{\alpha}
\def\z{\zeta}
\def\L{\lambda}
\def\TT{\tau}
\def\S{\sigma}
\def\P{\Phi}
\def\x{\xi}
\def\m{\mu}
\def\ad{{\rm ad}\ }
\def\U{\mathcal{U}_q(r, s)}
\def\Ur{\mathcal{U}_q(r, 0)}
\def\UrOne{\mathcal{U}_q(1, 0)}
\def\Us{\mathcal{U}_q(0, s)}
\def\UsOne{\mathcal{U}_q(0, 1)}
\def\Uss{\mathcal{U}_q(s,0)}
\def\genset{\mathcal{X}}
\def\monoid{\left<\genset\right>}
\def\freeF{\F\monoid}
\def\freeFtwo{\F\left<\genset_2\right>}
\def\freeY{\F\left<\scY\right>}
\def\Liesub{\mathcal{L}}
\def\LiesubUr{\mathcal{L}_{(r,0)}}
\def\LiesubUss{\mathcal{L}_{(s,0)}}
\def\LiesubUs{\mathcal{L}_{(0,s)}}
\def\r{\mathfrak{r}}
\def\a{\mathfrak{a}}
\def\b{\mathfrak{b}}
\def\LieIso{\Psi}
\def\nonHomo{\Phi}
\def\univ{\mathcal{U}}
\def\theLie{\mathfrak{g}}
\def\LieA{\theLie_r}
\def\LieB{\widetilde{\theLie}_s}
\def\uLieA{\univ(\LieA)}
\def\uLieB{\univ(\LieB)}
\def\LieAone{\theLie_1}
\def\LieBone{\widetilde{\theLie}_1}
\def\uLieAone{\univ(\LieAone)}
\def\uLieBone{\univ(\LieBone)}
\DeclareMathOperator{\Span}{Span}
\newcommand\footnoteref[1]{\protected@xdef\@thefnmark{\ref{#1}}\@footnotemark}
\begin{document}

\hrule \vspace{3pt}\medskip
\noindent{\small {\bf Lie polynomials in a $q$-deformed universal enveloping algebra of a low-dimensional {L}ie algebra} \hfill\\ \emph{Rafael Reno S. Cantuba}\footnote{\label{dms}Department of Mathematics and Statistics,  De La Salle University, 2401 Taft Ave., Malate, Manila, Philippines}$^{,}$\footnote{Corresponding author: Email: rafael.cantuba@dlsu.edu.ph, ORCiD: 0000-0002-4685-8761}, \emph{Mark Anthony C. Merciales}\footnoteref{dms}$^{,}$\footnote{The second author acknowledges the support of the Science Education Institute of the Department of Science and Technology (DOST-SEI), Republic of the Philippines.}} \medskip\hrule\ 

\begin{quote}  \textsc{{Abstract}}. The nonabelian two-dimensional Lie algebra over a field $\mathbb{F}$ has a presentation by generators $A$, $B$ and relation $\left[ A,B\right]=A$, with the universal enveloping algebra having a presentation by generators $A$, $B$ and relation $AB-BA=A$. A well-known fact is that the said Lie algebra is isomorphic to that which has a universal enveloping algebra that has a presentation by generators $A$, $B$ and relation $AB-BA=B$. Given $q,r,s\in\mathbb{F}$, solutions to the Lie polynomial characterization problems in the corresponding $q$-deformed universal enveloping algebras, with generalized relations $AB-qBA=rA$ and $AB-qBA=sB$, respectively, are presented.\\

\textsc{Mathematics Subject Classification (2020)}: 17B37 $\cdot$ 17B30 $\cdot$ 16S15

\textsc{Keywords}: Lie polynomial, $q$-analog, universal enveloping algebra, Lie algebra
\end{quote}

\section{\normalsize Introduction}

The nonabelian two-dimensional Lie algebra $\LieAone$, over a field $\F$, is unique up to isomorphism. See, for instance, \cite[Theorem 3.1]{erd06}. A basis for $\LieAone$ consists of $A$ and $B$ that satisfy the relation $\lbrak A,B\rbrak=A$. The universal enveloping algebra of $\LieAone$ is the (associative) algebra $\uLieAone$ that has a presentation by generators $A$, $B$ and relation $AB-BA=A$. Let $q\in\F$. In this work, we shall be interested in what is called the ``$q$-analog'' or ``$q$-deformation'' of the Lie bracket operation that was done on $A$ and $B$, which results to the expression $AB-qBA$. The literature on $q$-analogs or $q$-deformations is extensive. With reference to the scope of this paper, what shall suffice is to mention here two of the important achievements made using $q$-analogs. The study of $q$-analogs of notions from ordinary calculus led to the discovery of many important notions and results in combinatorics, number theory, and other fields of mathematics \cite[p. vii]{kac02}, and the $q$-analogs of commutation relations of important Hilbert space operators have been successfully applied to, for instance, particle physics, knot theory and general relativity \cite[Chapter 12]{ern12}.

Since the $q$-deformation of the Lie bracket shall be considered later, we now mention some of the isomorphic forms of $\LieAone$ so that there shall be more clarity as to which of the isomorphisms are carried over, or not, after the $q$-deformation. If $\LieBone$ is the Lie algebra over $\F$ with a basis consisting of $A$ and $B$, subject to the relation $\lbrak A,B\rbrak=B$, then there exists a Lie algebra isomorphism $\LieAone\into\LieBone$ such that $A\mapsto B$ and $B\mapsto -A$. Given a nonzero $r\in\F$, if $\LieA$ is the Lie algebra over $\F$ with a basis consisting of $A$, $B$ that satisfy the commutation relation $\lbrak A,B\rbrak=rA$, then there exists a Lie algebra isomorphism $\LieA\into\LieAone$ that sends $A\mapsto A$ and $B\mapsto rB$. Also of interest here is the Lie algebra $\LieB$ over $\F$ with a basis consisting of $A$ and $B$ that obey the relation $\lbrak A,B\rbrak=sB$, given a nonzero $s\in\F$. There exists a Lie algebra isomorphism $\LieB\into\LieBone$ with the property that $A\mapsto sA$ and $B\mapsto B$. The universal enveloping algebras $\uLieA$, $\uLieB$, for all $r,s\in\F\setdiff\{0\}$, of the aforementioned Lie algebras are isomorphic. This may be proven routinely by the universal property of these universal enveloping algebras, together with the fact that, for the said isomorphic forms of the nonabelian two-dimensional Lie algebra, the inclusion maps $\LieA\hookrightarrow\uLieA$ and $\LieB\hookrightarrow\uLieB$ are injective \cite[Exercise 17.2]{hum72}. The isomorphism previously mentioned for the Lie algebras have their natural extension to algebra isomorphisms of the universal enveloping algebras. As a recapitulation, these algebra isomorphisms have the properties
\begin{eqnarray}
\uLieAone\into\uLieBone &: & A\mapsto B, \quad B\mapsto -A,\label{notMap1}\\
\uLieA\into\uLieAone &: &  A\mapsto A, \quad B\mapsto rB,\label{notMap2}\\
\uLieB\into\uLieBone &: & A\mapsto sA, \quad B\mapsto B.\label{notMap3}
\end{eqnarray}
Given a nonzero $q\in\F$, we now consider the $q$-deformation of the Lie bracket in the above algebras. Given a nonzero $r\in\F$, the algebra $\uLieA$ has a presentation by generators $A$, $B$ and relation $AB-BA=rA$. The corresponding $q$-deformed algebra is what we shall denote by $\Ur$ that has a presentation by generators $A$, $B$ and relation $AB-qBA=rA$. Analogously, the $q$-deformation $\Us$ for $\uLieB$ has a presentation by generators $A$, $B$ and relation $AB-qBA=sB$, where $s\in\F$ is nonzero.

The algebra $\uLieAone$ has a natural Lie algebra stucture induced by the operation that sends any\linebreak $X,Y\in\uLieAone$ to $XY-YX$. The Lie subalgebra of $\uLieAone$ generated by $A$ and $B$ is simply $\LieAone$, because of the relation $AB-BA=A$, and this is at the heart of the theory of universal enveloping algebras. This reduction of the Lie subalgebra to a smaller substructure is not necessarily true anymore for the algebra $\UrOne$. In $\UrOne$, we may still compute for Lie algebra expressions generated by $A$ and $B$, but the new relation $AB-qBA=A$ does not imply that the Lie subalgebra of $\UrOne$ generated by $A$ and $B$, or the set of all ``Lie polynomials'' in $A,B\in\UrOne$, shall be reduced into a small substructure. This is the main goal of this paper: to characterize all the Lie polynomials in $A$ and $B$ under a relation like $AB-qBA=A$, or what can be called the ``Lie polynomial characterization problem'' for the given presentation of $\UrOne$ by generators and relations. In particular, we want to solve this problem for the generalized algebras $\Ur$ and $\Us$.

Lie polynomial characterization problems were first studied in \cite{can15}, which was about the universal Askey-Wilson algebra, an important mathematical object in algebraic combinatorics, which arose from mathematical physics. The Lie polynomial characterization problem was completely solved for the $q$-deformed Heisenberg algebra and some extensions of this algebraic structure \cite{can19,can20,can21,can22,canmer21,cansil20a,cansil20b}. The $q$-deformed Heisenberg algebra \cite{hel00,hel05} is a $q$-analog of the Heisenberg-Weyl algebra \cite{can24}, which is an algebraic structure important in quantum theory.

Another central theme of this paper is that, when $q\neq 1$, there are no algebra homomorphisms that correspond to \eqref{notMap1}--\eqref{notMap3}, for the respective $q$-deformed algebras. We prove this in Corollaries~\ref{rCor}, \ref{sCor} and \ref{noIsoCor}. However, given some conditions on $r$ and $s$, the Lie algebra $\LiesubUr$ of all Lie polynomials in $A,B\in\Ur$ is isomorphic to the corresponding Lie subalgebra $\LiesubUs$ of $\Us$. After solving the Lie polynomial characterization problem in $\Ur$ in the proof of Theorem~\ref{TheThm}, we show in Theorem~\ref{TheIsoThm} that one Lie algebra isomorphism $\LiesubUr\into\LiesubUs$, under some conditions on $r$ and $s$, performs $A\mapsto -B$ and $B\mapsto -A$.

The aforementioned results were obtained with the aid of the Diamond Lemma for Ring Theory \cite[Theorem 1.2]{ber77}, which is an ingenious and indispensable tool in the determination of a basis for an algebra given a certain kind of presentation. The proofs and computations for the algebras $\Ur$ and $\Us$ that are based on the Diamond Lemma are analogous when done separately. For a better presentation of these proofs and computations, we decided to generalize the algebras $\Ur$ and $\Us$ into an algebra $\U$ which has a presentation by generators $A$, $B$ and relation $AB-qBA=rA+sB$. The basis theorem, Theorem~\ref{theobasisfor3gen}, is valid not only for the aforementioned restrictions on $r$ and $s$ for the relevant Lie algebras, but also for any choice of $r$ and $s$ in the field $\F$.

\section{\normalsize Preliminaries}

Given a field $\F$, any $\F$-algebra shall be assumed to be associative and unital. Since only one field $\F$ will be used, we further drop the prefix ``$\F$-'' and simply use the term algebra. A Lie algebra structure is induced on an algebra $\scA$ by the operation $[X, Y]:=XY-YX$ for all $X, Y \in \scA$. If $A_1, A_2, \ldots, A_n \in \scA$, then the Lie subalgebra $\scK$ of $\scA$ generated  by $A_1, A_2, \ldots, A_n$ is the smallest Lie subalgebra which contains  $A_1, A_2, \ldots, A_n$. That is, if $\scS$ is a Lie subalgebra of $\scA$, if $\{A_1, A_2, \ldots, A_n\} \subseteq \scS$, and if $\scS \subseteq \scK$, then $\scS=\scK$. In such a case, we refer to the elements of $\scK$ as \emph{Lie polynomials in $A_1, A_2, \ldots, A_n$}.

We denote the set of all nonnegative integers by $\N$, and the set of all positive integers by $\Z^+$. We fix $\nu\in \Z^+$, and let $\genset=\genset_\nu$ be a set with $\nu$ elements. The free monoid on $\genset$ shall be denoted by $\monoid$, while the free algebra generated by $\genset$ shall be denoted by $\freeF$. Most of the fundamental notions and properties of the aforementioned free monoid and free algebra may be seen, for instance, from \cite[Chapter~ 1]{lot97} or \cite[Section~1.1]{reu03}, and we proceed with only the terminology and notation necessary. Any basis element of $\freeF$ from the basis $\monoid$ is called a \emph{word} on $\genset$. The \emph{length} of a word $W\in \monoid$ shall be  denoted by $|W|$. Words of length $1$ are precisely the elements of $\genset$, and are called \emph{letters}. The word of length $0$ is called the \emph{empty word} in $\monoid$ and shall denoted by $I$,  which is also the identity element under the concatenation multiplication in $\monoid$. If $|W|\neq 0$, then $W$ is said to be a \emph{nonempty word}, but we further define $W^0:=I$. Multiplication in $\freeF$ is determined by the concatenation product in $\monoid$. Given a word $W\in \monoid$, a word $W'$ is said to be a \emph{subword} of $W$ if there exist words $L, R \in  \monoid$ such that $W=LW'R$.

If $\genset=\{X_1, \, X_2, \, \ldots, \,  X_\nu\}$, given $L_1$, $R_1$, $L_2$, $R_2$, $\ldots$, $L_m$, $R_m \in \freeF$, let $\scI$ be the (two-sided) ideal of $\freeF$ generated by $L_1 - R_1$, $L_2 - R_2$, $\ldots$ , $L_m - R_m$.  The \emph{algebra with generators $X_1  , X_2 , \ldots , X_n$ and relations $L_1 = R_1$, $L_2 = R_2$, $\ldots$ , $L_m = R_m$}
is the quotient algebra $\freeF/\scI$. With respect to the natural embedding $\genset\hookrightarrow\freeF/\scI$, if $\scK$ is the Lie subalgebra of $\freeF/\scI$ generated by $X_1, \, X_2, \, \ldots, \,  X_\nu$, then a characterization of the elements of $\scK$ is said to be a solution to the \emph{Lie polynomial characterization problem} with respect to the aforementioned presentation of $\freeF/\scI$.

We recall \emph{Bergman's Diamond Lemma} or the \emph{Diamond Lemma for Ring Theory} \cite[Theorem 1.2]{ber77}, together with some related notions taken from \cite[Section 1]{ber77}, which are crucial in determining a basis for $\freeF/\scI$. A set of ordered pairs of the form $\L=(W_\L , f_\L)$ where $W_\L \in \monoid$ and $f_\L \in\freeF$ is called a \emph{reduction system}. Let $S$ be a reduction system. Given $\L \in S$ and $L,R \in\monoid$, by the \emph{reduction} $\r_{L\L R}$ we mean the linear mapping $\freeF\longrightarrow\freeF$ that fixes all elements of $\monoid$ other than $LW_\L R$, and instead sends this basis element of $\freeF$ to the element $Lf_\L R$. A reduction  $\r_{L\L R}$ acts \emph{trivially} on an element $K$ of $\freeF$ if the coefficient of the basis element $LW_\L R$ in $K$ is zero.  If every reduction acts trivially on an element $K$, then $K$ is \emph{irreducible} (under $S$). We say that $K \in \freeF$ is \emph{reduction-finite} if, for every infinite sequence $\r_1, \r_2, \ldots$ of reductions, there exists $N \in \N$ such that $\r_i$ acts trivially on $(\r_{i-1} \circ \r_{i-2} \circ \cdots \circ \r_1)(K)$ for all $ i\geq N$. If $K$ is reduction-finite, then a \emph{final sequence} is any maximal finite sequence of reductions $\r_i$, such that each $\r_i$ acts nontrivially on $(\r_{i-1} \circ \r_{i-2}\circ \cdots \circ \r_1)(K)$. Additionally, if $K$ is reduction-finite and if its images under final sequences of reductions are the same, then we say that $K$ is \emph{reduction-unique}.   

A 5-tuple $(\L, \TT, W_1, W_2, W_3)$ where $\L, \TT \in S$ and $W_1, W_2, W_3 \in \monoid$\textbackslash \{\textit{I}\} is an \emph{overlap ambiguity} if $W_\L =W_1W_2$ and $W_\TT =W_2W_3$. This ambiguity is said to be \emph{resolvable} if there exist compositions of reductions $\r$ and $\r'$ such that $\r(f_\L W_3) = \r'(W_1f_\TT)$. Also, a 5-tuple $(\L, \TT, W_1, W_2, W_3)$ where $\L \ne \TT \in S$ and $W_1, W_2, W_3 \in\monoid$ is an \emph{inclusion ambiguity} if $W_\L = W_2$ and $W_\TT =W_1W_2W_3$. This ambiguity is said to be \emph{resolvable} if there exist compositions of reductions $\r$ and $\r'$ such that $\r(f_\TT) = \r'(W_1f_\L W_3)$. By an \emph{ambiguity} of $S$, we mean either an overlap ambiguity or an inclusion ambiguity.

\begin{theorem}[Diamond Lemma] \label{berglemma}
 Let $S$ be a reduction system on $\freeF$. Let $\scA$ be an algebra with generators in $\genset$ and relations $W_\L=f_\L$ for all $\L\in S$.  The following conditions are equivalent.
\begin{enumerate}
\item All ambiguities of S are resolvable.
\item All elements of $\freeF$ are reduction-unique under S.
\item The set of all irreducible words on $\genset$ with respect to $S$ form a basis for $\scA$. 
\end{enumerate}

\end{theorem}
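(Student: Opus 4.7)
The plan is to establish the three conditions as pairwise equivalent via the cyclic chain $(ii) \implies (iii) \implies (i) \implies (ii)$, which isolates the genuine work to the last arrow. The underlying heuristic is that the notion of ``reduction-unique'' gives a well-defined normal-form map $N : \freeF \to \freeF$ whose image is the span of the irreducible words, and that ambiguities of $S$ are exactly the places where the two natural ways of applying reductions could give differing normal forms, so resolvability is the correct local obstruction to global confluence.

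For $(ii) \implies (iii)$, I would define $N(K)$ as the common value of $(\r_k \circ \cdots \circ \r_1)(K)$ over all final sequences (well-defined by hypothesis), observe that $N$ is linear and idempotent with image exactly $\Span\{W \in \monoid : W \text{ is irreducible}\}$, and check that $K - N(K) \in \scI$ for every $K$ since each individual reduction $\r_{L \L R}$ changes $K$ by an element of $\scI$ (namely a scalar multiple of $L(W_\L - f_\L)R$). Then $K \mapsto N(K)$ factors through $\freeF/\scI = \scA$ and realizes the irreducible words as a spanning set; linear independence modulo $\scI$ follows because if $K$ is irreducible and $K \in \scI$, then $N(K) = K$ while a closer bookkeeping argument shows $N$ vanishes on $\scI$. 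For $(iii) \implies (i)$, given an overlap ambiguity $(\L,\TT,W_1,W_2,W_3)$, both $f_\L W_3$ and $W_1 f_\TT$ represent $W_1 W_2 W_3$ modulo $\scI$; applying any sequence of reductions driving each to an irreducible element forces these irreducibles to be equal as elements of the basis of $\scA$, hence as elements of $\freeF$, producing the required resolution. Inclusion ambiguities are handled identically.

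The main obstacle is $(i) \implies (ii)$, which is the actual ``Diamond'' part. The strategy is a Newman-style argument: first arrange that every element is reduction-finite, then upgrade local confluence (resolvability of ambiguities) to global confluence (reduction-uniqueness). To make this run one needs a semigroup partial order on $\monoid$ with descending chain condition that is compatible with $S$ in the sense that each word appearing in the support of $f_\L$ is strictly less than $W_\L$; this ingredient is implicit in the use of the Diamond Lemma and is what forces reduction sequences to terminate. Granting termination, I would induct on the partial order: for an element $K$ supported on words $\leq W$, suppose reduction-uniqueness holds for all elements supported on strictly smaller words. A divergence at $W$ arises either trivially (the two reductions touch disjoint subwords of $W$, and one completes them in either order) or nontrivially, in which case the two reduced subwords overlap or one contains the other, giving an overlap or inclusion ambiguity. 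Resolvability of that ambiguity, combined with the inductive hypothesis applied to the strictly smaller terms produced after one reduction, closes the diamond at $W$.

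The technically delicate point I anticipate is precisely the bookkeeping in the inductive step of $(i) \implies (ii)$: the difference $\r(f_\L W_3) - \r'(W_1 f_\TT)$ must be controlled by elements already known to be reduction-unique, and one has to be careful that the compositions $\r,\r'$ witnessing resolvability can be extended into compatible final sequences on the ambient element $K$. The cleanest way to manage this is to prove the stronger auxiliary statement that the set of reduction-unique elements is a subspace closed under left/right multiplication by letters, and to verify the inductive step on basis words first. Once that scaffolding is in place, linearity extends reduction-uniqueness to all of $\freeF$, completing the cycle and hence the equivalence of $(i)$, $(ii)$, and $(iii)$.
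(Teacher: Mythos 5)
The paper gives no proof of Theorem~\ref{berglemma}: it is quoted, in abridged form, from Bergman \cite[Theorem 1.2]{ber77}, so the only meaningful comparison is with Bergman's original argument --- and your outline is essentially that argument. The cycle (ii)$\Rightarrow$(iii)$\Rightarrow$(i)$\Rightarrow$(ii), the linear normal-form map $N$ whose image is the span of the irreducible words and which annihilates the ideal generated by the elements $W_\L-f_\L$ (yielding both spanning and linear independence in (ii)$\Rightarrow$(iii)), the observation that under (iii) the two reducts of $W_1W_2W_3$ agree because they are congruent modulo that ideal and are combinations of basis words (yielding (iii)$\Rightarrow$(i)), and a Newman-style induction along a compatible order for (i)$\Rightarrow$(ii), split into the disjoint, overlap and inclusion cases, is exactly Bergman's route. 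You were also right to restore the hypothesis that the paper's statement suppresses but Bergman's theorem carries explicitly: a semigroup partial order on $\monoid$ satisfying the descending chain condition and compatible with $S$, in the sense that every monomial occurring in $f_\L$ is strictly below $W_\L$. Without it the equivalence is false (a single rule whose right-hand side is not below its left-hand side can have no ambiguities at all while nothing is reduction-finite and the irreducible words fail to span), so this is not optional bookkeeping but part of the theorem being proved.

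The one inaccuracy is the scaffolding you propose for the hard implication: ``the reduction-unique elements form a subspace closed under left/right multiplication by letters.'' The subspace statement is Bergman's Lemma 1.1(a) and is fine, but closure under multiplication by letters is not a lemma available in advance: it fails for reduction systems with unresolvable ambiguities (two rules sending $xa\mapsto b$ and $xa\mapsto c$ leave the letter $a$ reduction-unique while $xa$ is not), so inside the induction it can only be justified by the very confluence you are trying to prove, and as stated the step is circular. Bergman's substitute is Lemma 1.1(b): if $a,b,c$ are monomials such that for every finite composition $\r$ of reductions the element $a\,\r(b)\,c$ is reduction-unique, then $abc$ is reduction-unique; this, together with resolvability interpreted relative to the partial order, is what allows the witnessing compositions $\r,\r'$ of a resolvable ambiguity to be transported through the ambient context surrounding $W_1W_2W_3$ and combined with the inductive hypothesis on elements supported on strictly smaller monomials. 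With that repair, your sketch is a faithful reconstruction of the standard proof of the statement the paper cites.
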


\section{\normalsize The algebra $\U$}

We now consider the case when $\genset$ has only two elements $A$ and $B$. Given $q, r, s\in \F$, let $\scI_1=\scI_1(q,r,s)$ be the ideal of $\freeF$ generated by $AB-qBA-rA-sB$, and let $\U:=\freeF/\scI_1$.

\begin{proposition}\label{propoA^nB&AB^n}
For any $n \in \Z^+$, the identities 
\begin{eqnarray}
A^nB&=&B\sum_{t=0}^{n}\binom{n}{t}s^{t}(qA)^{n-t}+r\sum_{i=0}^{n-1}(qA+s)^{n-1-i}A^{i+1}, \label{EqA^nB} \\
AB^n&=&\sum_{t=0}^{n}\binom{n}{t}r^{t}(qB)^{n-t}A+s\sum_{i=0}^{n-1}(qB+r)^{n-1-i}B^{i+1} \label{EqAB^n} 
\end{eqnarray}
hold in $\U$.
\end{proposition}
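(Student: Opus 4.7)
I would proceed by induction on $n$, treating \eqref{EqA^nB} and \eqref{EqAB^n} separately but analogously. The first simplification is to observe that since $q,r,s\in\F$ are central in $\U$, the binomial theorem yields
\[\sum_{t=0}^{n}\binom{n}{t}s^{t}(qA)^{n-t}=(qA+s)^{n},\qquad \sum_{t=0}^{n}\binom{n}{t}r^{t}(qB)^{n-t}=(qB+r)^{n}.\]
So the cleaner statements I would actually verify are
\[A^nB = B(qA+s)^{n}+r\sum_{i=0}^{n-1}(qA+s)^{n-1-i}A^{i+1},\qquad AB^n = (qB+r)^{n}A+s\sum_{i=0}^{n-1}(qB+r)^{n-1-i}B^{i+1}.\]
The base case $n=1$ collapses in each identity precisely to the defining relation $AB=qBA+rA+sB$ of $\U$, so it requires no work.

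For the inductive step in \eqref{EqA^nB}, I multiply the inductive hypothesis on the left by $A$, obtaining
\[A^{n+1}B = (AB)(qA+s)^{n} + r\sum_{i=0}^{n-1}A(qA+s)^{n-1-i}A^{i+1},\]
where I use that $A$ commutes with $(qA+s)^{n-1-i}$, a polynomial in $A$ with central-scalar coefficients. I then substitute $AB=qBA+rA+sB$. Since $q$ and $s$ are central, the $B$-terms collapse as $qBA(qA+s)^{n}+sB(qA+s)^{n}=B(qA+s)^{n+1}$. The remaining $r$-terms equal $rA(qA+s)^{n}+r\sum_{i=0}^{n-1}(qA+s)^{n-1-i}A^{i+2}$, and after reindexing $j=i+1$ in the sum and absorbing $rA(qA+s)^{n}$ as the $j=0$ summand they become $r\sum_{j=0}^{n}(qA+s)^{n-j}A^{j+1}$, which is exactly the required shape at $n+1$.

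The argument for \eqref{EqAB^n} mirrors this one: I multiply the inductive hypothesis on the right by $B$, apply $AB=qBA+rA+sB$ to the lone leading $A$, and then combine $(qB+r)^{n}(qBA+rA)=(qB+r)^{n+1}A$ before reindexing the resulting $s$-sum in the same manner. The main difficulty is purely bookkeeping: one must be careful with commutation of $A$ (respectively $B$) with polynomials in itself and with the centralizing of $q,r,s$. No structural obstacle arises beyond the defining relation and associativity, which is why a single induction on $n$ suffices for each identity.
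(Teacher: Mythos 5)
Your proof is correct and follows essentially the same route as the paper's: induction on $n$, obtaining the $n+1$ case by multiplying on the left by $A$ (resp.\ on the right by $B$) and then applying the defining relation $AB=qBA+rA+sB$. Your preliminary rewriting of the binomial sums as $(qA+s)^n$ and $(qB+r)^n$ just makes explicit the bookkeeping the paper leaves as "routine."
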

\begin{proof} Both identities simply reduce to the defining relation $AB-qBA=rA+sB$ of $\U$ when $n=1$. If the given identities hold for some $n\in\Z^+$, then, with the goal of performing induction on $n$, at $n+1$, the desired left-hand sides may be obtained by multiplying $A$ from the left or by $B$ from the right. In the resulting right-hand sides, the identity $AB=qBA+rA+sB$ may be used such that, after a finite number of steps, the desired linear combinations of words will appear in the new right-hand sides. By induction on $n$, the desired identities are indeed true in $\U$.
\end{proof}

By introducing a new letter $C:=[A,B]=AB-BA$, the algebra $\U$ would consequently have the following presentation.
\begin{lemma}\label{lemmapresentation1}
The algebra $\U$ has a presentation by generators $A, B, C$ and relations
\begin{eqnarray}
    AB-qBA&=&rA+sB, \label{mainsubrel1}\\
    C&=&AB-BA. \label{mainsubrel2}
\end{eqnarray}
\end{lemma}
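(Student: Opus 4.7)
The plan is to prove Lemma~\ref{lemmapresentation1} by invoking the universal property of algebras presented by generators and relations, and exhibiting mutually inverse algebra homomorphisms between $\U$ and the candidate algebra $\U'$ defined to have generators $A,B,C$ and relations \eqref{mainsubrel1}--\eqref{mainsubrel2}.

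First, I would construct a homomorphism $\varphi \colon \U' \into \U$. By the universal property of $\U'$, it suffices to send $A \mapsto A$, $B \mapsto B$, $C \mapsto AB-BA$ in $\U$ and then verify that the images of the defining relations of $\U'$ vanish. Relation \eqref{mainsubrel1} maps to the defining relation $AB-qBA-rA-sB$ of $\U$, which is zero by construction of $\U$. Relation \eqref{mainsubrel2} maps to $(AB-BA)-(AB-BA) = 0$. Hence $\varphi$ is well-defined.

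Next, I would construct a homomorphism $\psi \colon \U \into \U'$. By the universal property of $\U$, it suffices to send $A \mapsto A$ and $B \mapsto B$ in $\U'$ and verify the image of the defining relation $AB-qBA-rA-sB$ vanishes in $\U'$, which is precisely relation \eqref{mainsubrel1}. Hence $\psi$ is well-defined.

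Finally, I would check that $\varphi$ and $\psi$ are mutual inverses. The composite $\varphi\circ\psi \colon \U \into \U$ fixes the generators $A,B$ of $\U$ and so is the identity. The composite $\psi\circ\varphi \colon \U' \into \U'$ fixes $A$ and $B$ and sends $C$ to $AB-BA$, which in $\U'$ equals $C$ by relation \eqref{mainsubrel2}; since $\U'$ is generated by $A,B,C$, this composite is also the identity. Therefore $\U\cong\U'$, which gives the desired alternative presentation. There is no real obstacle here: the entire argument is formal, and the only thing to keep track of is that the newly introduced generator $C$ is expressible in terms of $A,B$ via \eqref{mainsubrel2}, so no new elements have actually been added to the algebra.
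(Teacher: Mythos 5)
Your proposal is correct and is essentially the paper's own argument: the paper likewise notes that the generators of each presentation satisfy the relations of the other and appeals to a ``routine argument'' for the isomorphism $\freeF/\scI_1\into\freeY/\scI_2$ with $A\mapsto A$, $B\mapsto B$, $[A,B]\mapsto C$. You have merely written out that routine argument in full (the two universal-property maps and the check that they are mutually inverse), which is fine.
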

\begin{proof} Given $\scY=\{A,B, C\}$, let $\scI_2$ be the ideal of $\freeY$ generated by $ AB-qBA-rA-sB$ and $ C-AB+BA$. Since the generators in the respective presentations for $\freeF/\scI_1$ and $ \freeY/\scI_2$ satisfy the relations of the other, a routine argument may be used to show that there exists an algebra isomorphism $\freeF/\scI_1\longrightarrow \freeY/\scI_2$ which maps $A \mapsto A$, $B \mapsto B$ and $[A,B] \mapsto C$.
\end{proof}

We will often refer to some $q$- special relations from \cite[Appendix C]{hel00} such as the following. For a given $n \in \N$ and $z \in \F$,
\begin{eqnarray}
\{n\}_z&:=& \sum_{t=0}^{n-1}z^t, \label{qspecialeq1}\\
 (1-z)\{n\}_z &=& 1-z^{n}, \label{qspecialeq5}
\end{eqnarray}

If $n \leq 0$, then we interpret \eqref{qspecialeq1} as the empty sum $0$. %Additionally, we will also need the following identity for some our proofs:
%\begin{eqnarray}
%\binom{h}{j} &=&\binom{h-1}{j-1} +\binom{h-1}{j}. 
%\end{eqnarray}

\begin{lemma}\label{lemmaforAC^nequivwithx2} Let $\x_2=\x_2(q,s):=AC-qCA-sC$. For any $h\in \Z^+$, 
\begin{eqnarray}
    \sum_{i=1}^{h}q^{i-1}C^{i-1}\x_2C^{h-i}&=&AC^{h}-q^hC^{h}A-\{h\}_qsC^{h}. \label{summationforAC^nequivwithx2}
\end{eqnarray}
\end{lemma}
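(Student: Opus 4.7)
The plan is to prove the identity by direct algebraic manipulation: substitute the definition of $\x_2$ into the left-hand side, split the resulting expression into three separate sums, and show that two of them collapse telescopically while the third is recognizable as $\{h\}_q s C^h$ via \eqref{qspecialeq1}. I do not expect induction on $h$ to be needed here, although it would provide a valid alternative route.

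Concretely, I would begin by writing
\begin{eqnarray*}
\sum_{i=1}^{h} q^{i-1} C^{i-1} \x_2 C^{h-i}
&=& \sum_{i=1}^{h} q^{i-1} C^{i-1} A C^{h-i+1}
 - \sum_{i=1}^{h} q^{i} C^{i} A C^{h-i}
 - s \sum_{i=1}^{h} q^{i-1} C^{h},
\end{eqnarray*}
where the factor $C^{i-1}\cdot C = C^i$ has been absorbed in the middle term and $C^{i-1}\cdot C^{h-i+1} = C^h$ type absorptions are used in the last term.

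Next, I would reindex the first sum by letting $j = i - 1$, converting it into $\sum_{j=0}^{h-1} q^j C^j A C^{h-j}$. Comparing this with the second sum $\sum_{i=1}^{h} q^i C^i A C^{h-i}$, the intermediate terms (those with index $1 \leq j \leq h-1$) cancel in pairs, leaving only the $j=0$ term $A C^h$ from the first sum and the $i=h$ term $q^h C^h A$ from the second. Thus the first two sums together equal $A C^h - q^h C^h A$.

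For the third sum, I would apply \eqref{qspecialeq1} directly: $\sum_{i=1}^{h} q^{i-1} = \sum_{t=0}^{h-1} q^t = \{h\}_q$, so the third term contributes $-\{h\}_q s C^h$. Combining the three contributions yields the right-hand side of \eqref{summationforAC^nequivwithx2}, completing the proof. The only subtlety, and hence the main place to be careful, is the bookkeeping in the telescoping step; otherwise the argument is essentially a mechanical manipulation of finite sums.
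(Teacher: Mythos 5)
Your proof is correct, and it takes a different route from the paper's: the paper proves \eqref{summationforAC^nequivwithx2} by induction on $h$ (multiplying the inductive identity by $C$ on the right, replacing the resulting $AC$ inside $C^hAC$ via $AC=\x_2+qCA+sC$, and adding $q^hC^h\x_2$ to both sides), whereas you expand $\x_2$ inside the sum, split into three sums, telescope the first two, and evaluate the third with \eqref{qspecialeq1}. Your decomposition and bookkeeping check out: after reindexing, the $AC$- and $qCA$-sums cancel except for the boundary terms $AC^h$ and $-q^hC^hA$, and $\sum_{i=1}^{h}q^{i-1}=\{h\}_q$ gives the remaining $-\{h\}_q sC^h$. (Your parenthetical explanation of which power of $C$ is absorbed where is slightly garbled --- in the last term it is $C^{i-1}\cdot C\cdot C^{h-i}=C^h$ --- but the displayed equation itself is right.) One point worth making explicit, which your argument happily preserves: the identity must hold formally in the free algebra $\freeFtwo$, not merely in $\U$, because the paper later uses it to conclude that $\x_5(k)$ lies in the ideal $\scI_2$; your manipulations use only linearity and concatenation, so they are valid at that level, just as the paper's inductive proof is. The trade-off is minor: your closed-form telescoping is arguably more transparent and avoids induction, while the paper's induction mirrors the style of its neighboring lemmas and keeps each step a one-line substitution.
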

\begin{proof} The case $h=1$ is simply the definition of $\x_2$. Suppose \eqref{summationforAC^nequivwithx2} is true for some $h\in \N$. Multiplying both sides by $C$ from the right, the resulting right-hand side is a linear combination of the words $AC^{h+1}$, $C^hAC$ and $C^{h+1}$, where the $AC$ in $C^hAC$ may be replaced using the relation $AC=\x_2+qCA+sC$, which is immediate from the definition of $\x_2$. Adding $q^hC^{h}\x_2$ to both sides, we find that the identity is true at $h+1$. The desired result follows by induction on $h$.
\end{proof}

Succeeding computations will involve division by a power of $q$ or by a field element of the form $1-q^m$ for some nonzero integer $m$. Thus, we need the following.

\begin{assumption}\label{qFassume} The field $\F$ has characteristic zero, and the scalar $q$ is nonzero, and is not a root of unity.
\end{assumption}

\begin{proposition}\label{proppresentation3gen} The algebra $\U$ has a presentation by generators $A, B, C$ and relations 
\begin{eqnarray}
    AB&=&\frac{rA+sB-qC}{1-q}, \label{proprel1}\\
    AC&=&qCA+sC, \label{proprel2} \\
    BA&=&\frac{rA+sB-C}{1-q}, \label{proprel3}\\
    CB&=&qBC+rC, \label{proprel4}\\
    BC^kA&=&\frac{q^k rC^{k}A+q^ksBC^{k} +\{k\}_q rsC^{k}-C^{k+1}}{q^{k}(1-q)},\quad k\in \Z^+. \label{proprel5} 
    \end{eqnarray}
\end{proposition}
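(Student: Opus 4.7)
The plan is to show that the algebra presented by generators $A, B, C$ and relations \eqref{proprel1}--\eqref{proprel5} is isomorphic to $\U$ by verifying that the relations of Lemma~\ref{lemmapresentation1} and those of this proposition generate the same two-sided ideal in $\F\langle A, B, C\rangle$. Equivalently, I would derive each of \eqref{proprel1}--\eqref{proprel5} from the Lemma's two relations $AB - qBA = rA + sB$ and $C = AB - BA$, and conversely recover the Lemma's relations from \eqref{proprel1}--\eqref{proprel5}.

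For the forward direction, relations \eqref{proprel1} and \eqref{proprel3} follow by regarding $AB - BA = C$ and $AB - qBA = rA + sB$ as a linear system in the unknowns $AB$ and $BA$ and solving; this is legitimate because Assumption~\ref{qFassume} ensures $1 - q \neq 0$. Relation \eqref{proprel2} comes from expanding
\[
AC - qCA = A(AB - BA) - q(AB - BA)A
\]
and applying $AB = qBA + rA + sB$ twice, after which the cubic words in $A$ and $B$ collapse to leave precisely $s(AB - BA) = sC$. Relation \eqref{proprel4} is obtained by the symmetric computation of $CB - qBC$.

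Relation \eqref{proprel5} is the most intricate step. I would first establish the auxiliary identity $AC^k = q^k C^k A + \{k\}_q s C^k$, which is immediate from Lemma~\ref{lemmaforAC^nequivwithx2}: the already-derived \eqref{proprel2} forces $\x_2 = 0$ in $\U$, so the left-hand side of \eqref{summationforAC^nequivwithx2} vanishes. Solving this identity for $C^k A$ and multiplying by $B$ on the left gives
\[
BC^k A = q^{-k}(BA)\,C^k - q^{-k}\{k\}_q s\, BC^k.
\]
I would then substitute \eqref{proprel3} for $BA$, re-expand $AC^k$ via the auxiliary identity, and use the $q$-identity $(1-q)\{k\}_q = 1 - q^k$ from \eqref{qspecialeq5} to combine the two resulting $BC^k$ contributions into a single coefficient $s/(1-q)$. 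Multiplying numerator and denominator by $q^k$ then yields precisely the stated right-hand side of \eqref{proprel5}.

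The reverse inclusion uses only \eqref{proprel1} and \eqref{proprel3}: their difference gives $AB - BA = (1-q)C/(1-q) = C$, and forming $AB - qBA$ from them yields $rA + sB$ after the $C$ terms cancel. The main obstacle is the bookkeeping behind \eqref{proprel5}; the nontrivial collapse of the two $BC^k$ contributions is exactly where the $q$-identity \eqref{qspecialeq5} does the real work, while every other step reduces to a short direct calculation.
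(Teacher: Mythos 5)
Your proposal is correct and follows essentially the same route as the paper: both establish that the relations of Lemma~\ref{lemmapresentation1} and \eqref{proprel1}--\eqref{proprel5} generate the same two-sided ideal, obtaining \eqref{proprel1} and \eqref{proprel3} by solving the linear system in $AB$, $BA$, getting \eqref{proprel2} and \eqref{proprel4} from the defining relations, and handling \eqref{proprel5} through the identity $AC^k=q^kC^kA+\{k\}_q sC^k$ coming from Lemma~\ref{lemmaforAC^nequivwithx2} together with the relation for $BA$. The only difference is presentational: the paper records explicit ideal-membership combinations (e.g.\ expressing $\x_2$ and $\x_5(k)$ in terms of $\z_1,\z_2$), whereas you carry out the equivalent rewriting computations modulo the ideal.
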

\begin{proof}
We view the left-hand side and right-hand side expressions of equations \eqref{proprel1} to \eqref{proprel5} as elements of $\freeFtwo$ and define
\begin{eqnarray}
\x_1&:=& AB-\frac{rA+sB-qC}{1-q}, \label{generatorx1} \\
\x_2&:=&AC-qCA-sC, \label{generatorx2}  \\
\x_3&:=&BA-\frac{rA+sB-C}{1-q}, \label{generatorx3}  \\
\x_4&:=&CB-qBC-rC, \label{generatorx4} \\
\x_5(k)&:=&BC^kA-\frac{q^k rC^{k}A+q^ksBC^{k} +\{k\}_q rsC^{k}-C^{k+1}}{q^{k}(1-q)},\quad k\in \Z^+. \label{generatorx5}
\end{eqnarray}  
Also, we denote generators of $\scI_2$ by
\begin{eqnarray}
    \z_1&=&AB-qBA-rA-sB, \label{generatorz1} \\
    \z_2&=&C-AB+BA. \label{generatorz2}
\end{eqnarray}
Let $\scI_3$ be the ideal of $\freeFtwo$ generated by 
\begin{equation} \label{elementsofV}
    \{\x_1, \x_2, \x_3, \x_4, \z_1, \z_2\}\cup\{\x_5(k)\}:  k \in \{1, 2, 3, \ldots\}.
\end{equation}
We claim that $\scI_2=\scI_3$. The relations \eqref{generatorx1}  and \eqref{generatorx3} may be used in some routine computations to obtain
\begin{eqnarray}
\x_3-\x_1&=&C-AB+BA, \label{equationx3-x1}\\
    \x_1 -q\x_3&=& AB-qBA-rA-sB, \label{equationx1-qx3} 
\end{eqnarray}
provided $q\neq 1$. With the use of \eqref{equationx3-x1} and \eqref{equationx1-qx3}, we have $\z_1, \z_2 \in \scI_3$. Thus we have $\scI_2 \subseteq \scI_3$. Next we show that $\scI_3 \subseteq \scI_2$. Observe that 
\begin{eqnarray}
    \z_1+q\z_2&=&AB-qBA-rA-sB+qC-qAB+qBA, \nonumber \\
   &=&(1-q)AB-rA-sB+qC, \nonumber \\
   \frac{\z_1+q\z_2}{1-q}&=&AB-\frac{rA+sB-qC}{1-q}. \label{equationforx1}
\end{eqnarray}
From \eqref{equationforx1}, we can easily derive
\begin{eqnarray}
    AB&=&\frac{\z_1+q\z_2}{1-q}+\frac{rA+sB-qC}{1-q}.
\end{eqnarray}
By further routine computations,
\begin{eqnarray}
  A\z_1-\z_1 A+A\z_2-q\z_2 A &=&AC-qCA-sC, \label{equationforx2}\\
\frac{\z_2+\z_1}{1-q}&=&BA-\frac{rA+sB-C}{1-q},\label{equationforx3}\\
   \z_1B-B\z_1+\z_2B-qB\z_2&=&CB-qBC-rC. \label{equationforx4} 
\end{eqnarray}
Equations \eqref{equationforx1} and \eqref{equationforx3} clearly show that $\x_1$ and $\x_3$ are linear combinations of $\z_1$ and $\z_2$, and so we have $\x_1, \x_3 \in \scI_2$. Also, because of absorbing property of ideals, equations \eqref{equationforx2} and \eqref{equationforx4} suggest that  $\x_2, \x_4 \in \scI_2$.

From  Lemma \ref{lemmaforAC^nequivwithx2}, we can easily obtain
\begin{eqnarray}
AC^{k}&=&\sum_{i=1}^{k}q^{i-1}C^{i-1}\x_2C^{k-i}+q^kC^{k}A+\{k\}_qsC^{k}.     
\end{eqnarray}
Since we have established that $\x_2, \x_3 \in \scI_2$, then $\x_3 C^n, B\sum_{i=1}^{k}q^{i-1}C^{i-1}\x_2C^{k-i} \in \scI_2$. Observe that
\begin{eqnarray}
    (1-q)\x_3 C^k&=&(1-q)BAC^k-rAC^k-sBC^{k}+C^{k+1}, \nonumber \\
   &=&(1-q)BAC^k-r\left(\sum_{i=1}^{k}q^{i-1}C^{i-1}\x_2C^{k-i}+q^kC^{k}A+\{k\}_qsC^{k}\right) \nonumber \\
   &&-sBC^{k}+C^{k+1}, \nonumber \\
    &=&(1-q)BAC^k-r\sum_{i=1}^{k}q^{i-1}C^{i-1}\x_2C^{k-i}-q^k rC^{k}A-\{k\}_q rsC^{k} \nonumber \\
    &&-sBC^{k}+C^{k+1}, \nonumber
\end{eqnarray}
and transposing the summation to the left-hand side, we obtain
\begin{eqnarray}
    (1-q)\x_3 C^k+r\sum_{i=1}^{k}q^{i-1}C^{i-1}\x_2C^{k-i}&=&(1-q)BAC^k-q^k rC^{k}A-\{k\}_q rsC^{k}-sBC^{k}+C^{k+1}. \nonumber
\end{eqnarray}
Also with Lemma \ref{lemmaforAC^nequivwithx2}, we have 
\begin{eqnarray}
(1-q)B\sum_{i=1}^{k}q^{i-1}C^{i-1}\x_2C^{k-i}&=&(1-q)BAC^{k}-(1-q)q^kBC^{k}A-(1-q)\{k\}_qsBC^{k}. \nonumber
\end{eqnarray}
The previous two identities, together with the earlier one with left-hand side $AC^{k}$, may be used in routine computations to show that  
\begin{equation}
 (1-q)\x_3 C^k+r\sum_{i=1}^{k}q^{i-1}C^{i-1}\x_2C^{k-i}- (1-q)B\sum_{i=1}^{k}q^{i-1}C^{i-1}\x_2C^{k-i},
\end{equation}%\vspace{-0.75cm}
is equal to the linear combination
\begin{eqnarray}
(1-q)q^kBC^{k}A-q^k rC^{k}A-q^ksBC^{k} -\{k\}_q rsC^{k}+C^{k+1},\nonumber 
\end{eqnarray}
and this implies that $\x_5(k)$ for any $k$ is a linear combination of elements in $\scI_2$ and that $\x_5(k)\in \scI_2$. Since the remaining generators $\z_1$ and $\z_2$ of $\scI_3$ are precisely the generators of $\scI_2$, we now have $\scI_3 \subseteq \scI_2$. Thus, we have $\U=\freeFtwo/\scI_3$.  
\end{proof}

%\begin{definition}\label{defUcountablyinfrel} USE INSTEAD: Proposition~\ref{proppresentation3gen}.

%%%%%%%%%%%%%%%%%%%%%%%%%%%%%%%%%%%%%%%%%%%%%%%%
\section{\normalsize A basis for $\U$}

A basis of an algebra holds essential information for understanding its algebraic structure \cite[p. 10]{ufn95}. 
So we choose a basis for $\U$ based from its presentation on Proposition \ref{proppresentation3gen} using Bergman's Diamond Lemma.

We use relations of $\U$ given in Proposition \ref{proppresentation3gen} to construct a reduction system in $\freeY$. Let
\begin{eqnarray}
\S_1&=&\left( AB, \frac{rA+sB-qC}{1-q}\right), \label{reduction1} \\
\S_2&=&\left(   AC , qCA+sC\right), \label{reduction12}\\
\S_3&=& \left( BA, \frac{rA+sB-C}{1-q} \right), \label{reduction3}\\
\S_4&=&\left( CB, qBC+rC\right), \label{reduction4} \\
\TT_k&=& \left( BC^kA, \frac{q^k r C^{k}A+q^ksBC^{k} +\{k\}_q rsC^{k}-C^{k+1}}{q^{k}(1-q)} \right),\quad k\in \Z^+. \label{reduction5k}
\end{eqnarray}
Then $R:=\{\S_i, \TT_k\  :\   i\in \{1, 2,3, 4\} , k \in \{1, 2, 3 \ldots\}\}$
is a reduction system in $\freeY$ for $\U$ in three generators. In order to use an implication in Bergman's Diamond Lemma, first we show that any ambiguity of $R$ is resolvable. It is routine to show that there is no inclusion ambiguity given the reduction system $R$. In addition, all overlap ambiguities that do not involve an element $\TT_k \in R$ are
\begin{eqnarray}
\P_1&=&(\S_1, \S_3, A, B, A), \\
\P_2&=&(\S_2, \S_4, A, C, B), \\
\P_3&=&(\S_3, \S_1, B, A, B), \\
\P_4&=&(\S_3, \S_2, B, A, C), \\
\P_5&=&(\S_4, \S_3, C, B, A),
\end{eqnarray}
while all the overlap ambiguities that depend on an integer parameter ($k$) are
\begin{eqnarray}
\P_{6}(k)&=&(\S_1, \TT_k, A, B, C^kA), \\
\P_{7}(k)&=&(\S_4,  \TT_k, C, B, C^kA), \\
\P_{8}(k)&=&(\TT_k, \S_1, BC^k, A,B), \\
\P_{9}(k)&=&(\TT_k, \S_2, BC^k, A, C).
\end{eqnarray}

\begin{lemma}\label{lemmaAC^mandC^mB}
    For any $m\in \Z^+$, the following hold in $\U$.
    \begin{eqnarray}
        AC^m&=&q^mC^mA+\{m\}_q sC^{m}, \label{equationlemmaAC^m} \\ 
        C^m B&=& q^mBC^m+\{m\}_q rC^{m}. \label{equationlemmaC^mB} 
    \end{eqnarray}
\end{lemma}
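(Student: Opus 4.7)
The plan is to prove both identities by straightforward induction on $m$, anchored by the two relations \eqref{proprel2} and \eqref{proprel4} in Proposition~\ref{proppresentation3gen}.

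For the base case $m=1$, the definition $\{1\}_q = 1$ (from \eqref{qspecialeq1}) turns \eqref{equationlemmaAC^m} into exactly $AC = qCA + sC$, which is \eqref{proprel2}, and analogously \eqref{equationlemmaC^mB} becomes $CB = qBC + rC$, which is \eqref{proprel4}. For the inductive step, assuming \eqref{equationlemmaAC^m} for some $m \in \Z^+$, I would multiply both sides on the right by $C$, obtaining
\begin{eqnarray*}
AC^{m+1} &=& q^m C^m (AC) + \{m\}_q s C^{m+1} \\
         &=& q^m C^m (qCA + sC) + \{m\}_q s C^{m+1} \\
         &=& q^{m+1} C^{m+1} A + (q^m + \{m\}_q) s C^{m+1},
\end{eqnarray*}
and then invoke the telescoping identity $\{m+1\}_q = \{m\}_q + q^m$, which is immediate from \eqref{qspecialeq1}. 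The argument for \eqref{equationlemmaC^mB} is symmetric: multiplying the inductive hypothesis on the \emph{left} by $C$ and applying \eqref{proprel4} to the middle $CB$ yields the corresponding statement at $m+1$.

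Alternatively, one may observe that the first identity \eqref{equationlemmaAC^m} is already essentially encoded in Lemma~\ref{lemmaforAC^nequivwithx2}: since $\x_2 = AC - qCA - sC$ equals $0$ in $\U$ by virtue of \eqref{proprel2}, the summation on the left side of \eqref{summationforAC^nequivwithx2} vanishes, and the identity $AC^h = q^h C^h A + \{h\}_q s C^h$ drops out immediately. A completely analogous observation based on $\x_4 = CB - qBC - rC = 0$ handles \eqref{equationlemmaC^mB}.

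There is no genuine obstacle here; this is a routine book-keeping lemma whose only subtle point is the correct manipulation of the $q$-integer $\{m\}_q$. The lemma itself is a preparatory step that will be applied later to resolve the overlap ambiguities $\P_6(k),\ldots,\P_9(k)$ involving the reductions $\TT_k$, and for that use the formulas need to be stated in this closed form rather than left implicit inside Lemma~\ref{lemmaforAC^nequivwithx2}.
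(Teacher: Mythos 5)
Your induction is exactly the paper's argument: the base case $m=1$ is the pair of relations \eqref{proprel2} and \eqref{proprel4}, and the inductive step (right-multiplication by $C$, resp.\ left-multiplication by $C$, together with $\{m+1\}_q=\{m\}_q+q^m$) is the routine induction the paper invokes. Your alternative observation that \eqref{equationlemmaAC^m} also falls out of Lemma~\ref{lemmaforAC^nequivwithx2} by setting $\x_2=0$ in $\U$ is a valid shortcut, but the main proof is essentially identical to the paper's.
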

\begin{proof} The case $m=1$ leads to precisely the equations \eqref{proprel2}  and \eqref{proprel4} in Proposition~\ref{proppresentation3gen}. By an argument similar to that done in Proposition~\ref{propoA^nB&AB^n}, the desired identities follow by induction on $m$.
\end{proof}

\begin{lemma} \label{lemmaallambiguityresolvable}
    All ambiguities of $R$ are resolvable.
\end{lemma}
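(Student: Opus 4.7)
The plan is to verify that each of the nine families of ambiguities listed resolves by computing the two possible cascades of reductions from the overlap word and showing they converge to the same expression. The key technical tools will be the two identities of Lemma~\ref{lemmaAC^mandC^mB} (for pushing $A$ past $C^m$ and for pushing $C^m$ past $B$) together with the $q$-identity \eqref{qspecialeq5} in the form $(1-q)\{m\}_q = 1-q^m$.

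First I would handle the five ambiguities $\P_1,\ldots,\P_5$ that involve only $\S_1$--$\S_4$, whose overlap words are $ABA$, $ACB$, $BAB$, $BAC$, $CBA$ respectively. Each is a finite check: the two branches (obtained by applying a different $\S_i$ first) produce short linear combinations in which any remaining $AB$, $BA$, $AC$, $CB$ subword can be reduced further by the appropriate $\S_j$. A direct term-by-term comparison of the resulting monomials ($A^2$, $B^2$, $CA$, $BC$, $C$, etc.) then suffices; no $\TT_k$ is invoked and no induction is needed.

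The parameterized families $\P_6(k)$--$\P_9(k)$, with overlap words $ABC^kA$, $CBC^kA$, $BC^kAB$, $BC^kAC$, are heavier. Consider $\P_6(k)$ as a representative: the $\S_1$-first branch produces $\tfrac{1}{1-q}(rAC^kA + sBC^kA - qC^{k+1}A)$, whose middle term is then reduced by $\TT_k$ and whose $AC^kA$ piece is simplified using Lemma~\ref{lemmaAC^mandC^mB} to transport $A$ across $C^k$; the $\TT_k$-first branch produces a combination of $AC^kA$, $ABC^k$, $AC^k$, and $AC^{k+1}$, whose $AB$-prefix terms are then reduced by $\S_1$ and whose remaining $AC^\ell$ blocks are again handled by Lemma~\ref{lemmaAC^mandC^mB}. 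Both branches collect into linear combinations of the candidate-irreducible monomials $C^\ell A^2$, $BC^\ell A$, $C^{\ell+1}A$, $BC^{\ell+1}$, $C^{\ell+2}$, and coefficient-matching yields the claim. The cases $\P_7(k)$, $\P_8(k)$, $\P_9(k)$ are organized in the same way, differing only in which branch requires $\TT_k$ first and which of the two identities of Lemma~\ref{lemmaAC^mandC^mB} is the relevant transporter.

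The main obstacle will be the bookkeeping in $\P_6(k)$ and $\P_8(k)$, where one branch produces a $\TT_k$ term directly while the other only assembles into a $\TT_k$-like expression after $A$ has been carried past $C^k$. Matching scalar coefficients there will require repeated use of $(1-q)\{k\}_q = 1-q^k$ together with the recursion $\{k+1\}_q = 1 + q\{k\}_q$, and careful tracking of the $q^k(1-q)$ denominators introduced by $\TT_k$; Assumption~\ref{qFassume} is what guarantees that all such scalars are invertible and that no cancellation is accidental.
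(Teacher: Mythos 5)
Your overall strategy is the one the paper uses: a case-by-case verification of the nine families of ambiguities, reducing both branches of each overlap word and comparing coefficients, with the two identities of Lemma~\ref{lemmaAC^mandC^mB} serving as the device for transporting $A$ across $C^k$ and $C^k$ across $B$ (the paper packages exactly these as the composite reductions $\a_{(k,W)}$ and $\b_{(k,U)}$), and with $(1-q)\{k\}_q=1-q^k$ and $\{k+1\}_q=1+q\{k\}_q$ handling the scalar bookkeeping. Your treatment of $\P_6(k)$ matches the paper's choice of reduction compositions for that case, and $\P_7(k)$--$\P_9(k)$ are indeed organized analogously.

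There is, however, one step that fails as stated: the claim that $\P_1,\ldots,\P_5$ resolve with ``no $\TT_k$ invoked,'' using only $\S_1$--$\S_4$ on remaining $AB$, $BA$, $AC$, $CB$ subwords. For $\P_4$ (overlap word $BAC$) the $\S_2$-first branch gives $Bf_{\S_2}=qBCA+sBC$, and for $\P_5$ (overlap word $CBA$) the $\S_4$-first branch gives $f_{\S_4}A=qBCA+rCA$; in each case the word $BCA=W_{\TT_1}$ appears, it contains none of $AB$, $BA$, $AC$, $CB$ as a subword, and it does not occur in the other (already irreducible) branch, so the two branches cannot be equated without applying $\r_{\TT_1}$ --- exactly as in the paper, where the resolving compositions for $\P_4$ and $\P_5$ are $\r'_4=\r_{\TT_1}$ and $\r_5=\r_{\TT_1}$. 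This is easily repaired within your framework, but as written the term-by-term comparison for these two ambiguities would not close. A smaller imprecision: in $\P_6(k)$ you list $BC^\ell A$ among the ``candidate-irreducible monomials,'' yet $BC^\ell A=W_{\TT_\ell}$ is reducible; by your own description the $sBC^kA$ term has already been reduced by $\TT_k$, so the final comparison should be made in the span of the genuinely irreducible words $B^lC^m$ and $C^mA^t$.
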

\begin{proof}
We prove this lemma directly by determining compositions of reductions $\r_i$ and $\r'_i$ for each ambiguity $\P_1, \ldots, \P_5$ and $\P_6(k), \ldots, \P_9(k)$ that will satisfy condition for resolvable ambiguity. For any positive integer $k$ and any $U, W \in \left< \scY\right>$, we let
\begin{eqnarray}
\a_{(k,W)}&:=&\r_{C^{k-1}\S_2W}\circ \r_{C^{k-2}\S_2 CW}\circ \r_{C^{k-3}\S_2 C^2W}\circ \cdots \circ \r_{C\S_2C^{k-2}W}\circ \r_{\S_2C^{k-1}W}, \nonumber \\
\b_{(k,U)}&:=&\r_{UC^{k-1}\S_4}\circ \r_{UC^{k-2}\S_4 C}\circ \r_{UC^{k-3}\S_4 C^2}\circ \cdots \circ \r_{UC\S_4C^{k-2}}\circ \r_{U\S_4C^{k-1}}.\nonumber 
\end{eqnarray}
With Lemma \ref{lemmaAC^mandC^mB}, we take note of the following simple results:
\begin{eqnarray}
\a_{(k,W)}(AC^kW)&=&q^kC^kAW+\{k\}_q s C^{k}W, \\
\b_{(k,U)}(UC^kB)&=&q^k UBC^k+\{k\}_q r UC^{k}.
\end{eqnarray}
For simpler notation, given $\m \in R$,  we write $\r_{I\m I}, \r_{U\m I}, \r_{I\m V}$ as $\r_{\m}, \r_{U\m}, \r_{\m V}$ respectively. 

We first consider $\P_1=(\S_1, \S_3, A, B, A)$. Notice that $ABA$ is precisely the nontrivial word involved with this overlap ambiguity. Let $\r_1=\r_{\S_3}$ and $\r'_1=\r_{\S_2}\circ \r_{\S_1}$.  Observe that

\begin{eqnarray}
    \r_1(f_{\S_1}A)  &=&\r_{\S_3}\left(\frac{rA^2+sBA-qCA}{1-q}\right), \nonumber \\
    &=&\frac{rA^2+s\left(\frac{rA+sB-C}{1-q}\right)-qCA}{1-q}, \nonumber \\
    &=&\frac{(1-q)rA^2+rsA+s^2B-sC-(1-q)qCA}{(1-q)^2}, \nonumber 
\end{eqnarray}
and 
\begin{eqnarray}
   \r'_1(Af_{\S_3})   &=&(\r_{\S_2}\circ \r_{\S_1})\left(\frac{rA^2+sAB-AC}{1-q}  \right), \nonumber \\
&=&\r_{\S_2}\left(\frac{rA^2+s\left(\frac{rA+sB-qC}{1-q}\right)-AC}{1-q}  \right), \nonumber \\
   &=&\frac{rA^2+s\left(\frac{rA+sB-qC}{1-q}\right)-(qCA+sC)}{1-q}, \nonumber \\
   &=&\frac{(1-q)rA^2+rsA+s^2B-sC-(1-q)qCA}{(1-q)^2}, \nonumber \\
&=&\r_1(f_{\S_1}A).\nonumber
\end{eqnarray}

Thus, for the ambiguity $\P_1=(\S_1, \S_3, A, B, A)$, if $\r_1=\r_{\S_3}$ and $\r'_1=\r_{\S_2}\circ \r_{\S_1}$, then we have $\r'_1(Af_{\S_3})=\r_1(f_{\S_1}A)$, which implies resolvability of the ambiguity. To complete the proof, we check all other ambiguities. The process involves routine computations like those done above for $\P_1$. We only summarize below what compositions of reductions are used for each ambiguity, which lead to the desired resolvability condition, like the equation $\r'_1(Af_{\S_3})=\r_1(f_{\S_1}A)$ for $\P_1$. Again, such equations may be verified routinely for each of the remaining ambiguities.

\begin{enumerate}\item For $\P_2=(\S_2, \S_4, A, C, B)$, if $\r_2=\r_{\S_4} \circ \r_{C\S_1}$ and $\r'_2=\r_{\S_2}\circ \r_{\S_1 C}$, then $ \r_2(f_{\S_2}B)= \r'_2(Af_{\S_4})$.
\item For $\P_3=(\S_3, \S_1, B, A, B)$, if $\r_3=\r_{\S_4} \circ \r_{\S_1}$ and $\r'_3=\r_{\S_3}$, then $ \r_3(f_{\S_3}B)= \r'_3(Bf_{\S_1})$.
\item For $\P_4=(\S_3, \S_2, B, A, C)$, if $\r_4=\r_{\S_2} $ and $\r'_4=\r_{\TT_1}$, then $ \r_4(f_{\S_3}C)= \r'_4(B f_{\S_2})$.
\item For $\P_5=(\S_4, \S_3, C, B, A)$, if $\r_5=\r_{\TT_1} $ and $\r'_5=\r_{\S_4}$, then $\r_5(f_{\S_4}A)=\r'_5(Cf_{\S_3})$.
\item For $\P_6(k)=(\S_1, \TT_k, A, B, C^kA)$, if $\r_6=\r_{\TT_k}\circ \a_{(k, A)} $ and $\r'_6= \a_{(k+1, I)} \circ \a_{(k, I)}\circ \r_{\S_1 C^k}\circ \a_{(k, A)}$, then $\r_6(f_{\S_1}C^kA)=\r'_6(A f_{\TT_k})$.
\item For $\P_7(k)=(\S_4,  \TT_k, C, B, C^kA)$, if $\r_7=\r_{\TT_{k+1}} $ and $\r'_7= \r_{\S_4}$, then $\r_7(f_{\S_4}C^kA)= \r'_7(C f_{\TT_k})$.
\item For $\P_8(k)=(\TT_k, \S_1, BC^k, A,B)$, if $\r_8=\b_{(k+1, I)} \circ \b_{(k, I)} \circ \b_{(k, B)}\circ  \r_{C^k\S_1} $ and $\r'_8=\b_{(k, B)} \circ \r_{\TT_k}$, then $\r_8(f_{\TT_k}B)=\r'_8(BC^kf_{\S_1})$.
\item For  $\P_{9}(k)=(\TT_k, \S_2, BC^k, A, C)$, if $\r_9=\r_{C^k\S_2} $ and $\r'_9= \r_{\TT_{k+1}}$, then $\r_9(f_{\TT_k}C)=\r'_9(BC^kf_{\S_2})$.
\end{enumerate}
These results suggest that with $\r_i$ and $\r'_i$ for $i \in \{1, 2, \ldots, 9\}$, all ambiguities $\P_1, \ldots, \P_5$ and $\P_6(k)$, $\ldots$ , $\P_9(k)$ are resolvable. This completes the proof.
\end{proof}

%%%%%%%%%%%%%%%%%%%%%%%%%%%%%%%%%%%%%%%%%%%%5
\begin{theorem}\label{theobasisfor3gen}
The elements
\begin{eqnarray}
  B^l C^m, C^m A^t,\qquad (l, m  \in \N,\   t\in \Z^+),
\end{eqnarray}
form a basis for $\U$.
\end{theorem}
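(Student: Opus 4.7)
The plan is to apply Bergman's Diamond Lemma (Theorem~\ref{berglemma}) to the reduction system $R$ constructed above. Lemma~\ref{lemmaallambiguityresolvable} verifies condition (i) of the Diamond Lemma, so the equivalent condition (iii) yields at once that the set of all words on $\scY=\{A,B,C\}$ which are irreducible under $R$ is a basis for $\U$. Consequently, the entire task reduces to identifying these irreducible words explicitly and showing that they coincide with the two families in the statement.

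A word $W$ on $\scY$ is irreducible under $R$ if and only if it contains no subword of the form $AB$, $AC$, $BA$, $CB$, or $BC^{k}A$ (for some $k\in\Z^+$), since these are precisely the left-hand sides $W_\L$ attached to the reductions $\S_1,\S_2,\S_3,\S_4,\TT_k$ in $R$. I would extract the consequences in turn. The prohibitions on $AB$ and $AC$ together imply that whenever the letter $A$ appears in $W$, it can only be followed by another $A$; hence all $A$'s in $W$ must form a single (possibly empty) suffix. The prefix preceding this suffix uses only $B$ and $C$, and within it the prohibition ``no $CB$'' forces every $B$ to precede every $C$. Therefore each irreducible word takes the form $B^{l}C^{m}A^{t}$ with $l,m,t\in\N$.

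Applying the remaining prohibitions to a candidate word $B^{l}C^{m}A^{t}$: the rule ``no $BA$'' eliminates the case $l\geq 1$, $m=0$, $t\geq 1$, while ``no $BC^{k}A$'' applied with $k=m$ eliminates the case $l\geq 1$, $m\geq 1$, $t\geq 1$. Hence $B^{l}C^{m}A^{t}$ is irreducible exactly when $l=0$ or $t=0$. Setting $t=0$ recovers the family $B^{l}C^{m}$ with $l,m\in\N$, and setting $l=0$ with $t\geq 1$ recovers the family $C^{m}A^{t}$ with $m\in\N$, $t\in\Z^+$; these two families are visibly disjoint (the first contains no $A$) and exhaust the irreducible words, yielding the claimed basis. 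I do not foresee any substantial obstacle here, since the delicate work of verifying resolvability of ambiguities has already been discharged in Lemma~\ref{lemmaallambiguityresolvable}; what remains is purely a structural enumeration of subword-avoidance.
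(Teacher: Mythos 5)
Your proposal is correct and follows essentially the same route as the paper: invoke Lemma~\ref{lemmaallambiguityresolvable} so that the Diamond Lemma identifies the $R$-irreducible words as a basis of $\U$, then enumerate those words by subword avoidance. Your structural argument (all $A$'s form a suffix, the $\{B,C\}$-prefix avoiding $CB$ must be $B^lC^m$, then $BA$ and $BC^kA$ force $l=0$ or $t=0$) is just a slightly more explicit organization of the paper's own case analysis.
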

\begin{proof} 
We consider $R$ whose elements are given by \eqref{reduction1}-\eqref{reduction5k} based from the defining relations of $\U$ as previously stated in Proposition \ref{proppresentation3gen}. We first show that
\begin{equation} \label{basisforalgebrain3gen}
    \{B^h C^j,  C^j A^k\  :\    h, j \in \N, k\in \Z^+\},
\end{equation}
is the set of all irreducible words under $R$. Notice that collection \eqref{basisforalgebrain3gen} is clearly a set of irreducible words with respect to the reduction system $R$ since words $AB, AC, BA, CB$ and $BC^kA$ with $k\in \Z^+$ do not appear as a subword in any of its elements. Suppose $W$ is not in \eqref{basisforalgebrain3gen}. Then $W$ must have a subword of the form $A^xC^yB^z$ or $B^uC^yA^w$ where $ x, y, z  \in \N, u, w \in \Z^+$ and at most one of the nonnegative powers  is equal to zero. It is clear that we cannot have two or three variables among $x, y$ and $z$ to be zero for $A^xC^yB^z$ because it will contradict our supposition. This means that we only have to consider cases when $x=0$, $y=0$, $z=0$, and when $x, y, z \in \Z^+$ for $A^xC^yB^z$. Meanwhile, we have cases $y=0$, and $y\neq 0$ for  $B^uC^yA^w$. 
If $x=0$, then a reduction which involves $\S_4$ would act nontrivially on  $A^xC^yB^z=C^yB^z$. If $y=0$, a reduction which involves $\S_1$ would act nontrivially on  $A^xC^yB^z=A^xB^z$,  while a reduction which involves $\S_3$ would act nontrivially on  $B^uC^yA^w=B^uA^w$. And if $z=0$, a reduction which involves $\S_2$ would act nontrivially on $A^xC^yB^z=A^xC^y$. For the case $x, y, z \in\Z^+$, reductions which involve $\S_2$ or  $\S_4$ would act nontrivially on $A^xC^yB^z$ ,  while reductions which involve $\TT_y$ would act nontrivially on  $B^uC^y\A^w$ when $y \neq 0$.

It is clear that in any of the mentioned cases, $W$ is not irreducible. Thus, any irreducible element with respect to the the reduction system $R$ are  in \eqref{basisforalgebrain3gen}.
Now, we only need to show that elements in \eqref{basisforalgebrain3gen} form a basis for $\U$. To do this, we invoke Bergman's Diamond Lemma. The only implication we
need from the Diamond Lemma is that: if all the ambiguities of a reduction
system $S$ are resolvable and if $\scK$ is the ideal of $\freeF$ generated by all  $W_{\S}-f_{\S} (\S \in S)$, then the images of all the $S$-irreducible words under the canonical map $\freeF \longrightarrow \freeF/\scK$ form a basis for $\freeF/\scK$. If we take $S=R$, $\scK=\scI_3$ generated by expressions in \eqref{generatorx1}-\eqref{generatorz2}, then with Lemma \ref{lemmaallambiguityresolvable}, the elements in \eqref{basisforalgebrain3gen} form a basis for $\U$.
\end{proof}

\section{\normalsize The algebra $\Ur$}
In this section, we shall consider the case $r\neq 0$ and $s=0$ for the algebra $\U$, that is, the algebra $\Ur$ that has presentation by generators $A, B$ and relation
\begin{eqnarray}
  AB-qBA=rA. \label{rel1Ur}  
\end{eqnarray}

To proceed, we again make use of the additional generator $C=[A,B]=AB-BA$ so that algebra $\Ur$ would have a presentation by generators $A, B, C$ and relations
\begin{eqnarray}
    AB&=&\frac{rA-qC}{1-q}, \label{mainrel1Ur}\\
    AC&=&qCA, \label{mainrel2Ur} \\
    BA&=&\frac{rA-C}{1-q}, \label{mainrel3Ur}\\
    CB&=&qBC+rC, \label{mainrel4Ur}\\
    BC^kA&=&\frac{q^k rC^{k}A -C^{k+1}}{q^{k}(1-q)},\quad k\in \Z^+, \label{mainrel5Ur} 
    \end{eqnarray}
which follows directly from Proposition \ref{proppresentation3gen}.

Let $n\in\Z^+$. Routine induction using \eqref{rel1Ur} results to
   \begin{eqnarray}
        A^nB&=&q^nBA^n+\{n\}_q rA^n, \label{EqA^nBlemmaUr}
    \end{eqnarray}
both sides of which, we multiply by $B^{n-1}$ from the right. The resulting right-hand side is a linear combination of only two words. The first term is $(q^nBA^n)B^{n-1}$. To the expressions in parentheses, we substiture using $q^nBA^n=A^nB-\{n\}_q rA^n$, which is just one equivalent form of \eqref{EqA^nBlemmaUr}. We now have
    \begin{eqnarray}
        A^nB^n&=&(q^{n}BA+\{n\}_qrA)A^{n-1}B^{n-1}, \label{EqA^nB^nUr}
    \end{eqnarray}
which we shall use to prove 
     \begin{eqnarray}
    A^nB^n&=&\prod_{i=0}^{n-1}(q^{n-i}BA+\{n-i\}_qrA). \label{EqA^nB^nproductUr}
 \end{eqnarray}
The case $n=1$ is simply the relation \eqref{rel1Ur}. Suppose \eqref{EqA^nB^nproductUr} is true for some $n\in\Z^+$. We multiply both sides of \eqref{EqA^nB^nUr} by $A$ from the left and $B$ from the right. The inductive hypothesis, with the aid of \eqref{rel1Ur}, may then be used on the resulting right-hand side, and \eqref{EqA^nB^nproductUr} follows.

Using the relation \eqref{mainrel1Ur} on \eqref{EqA^nB^nproductUr}, routine computations may be used to prove
    \begin{eqnarray}
        A^nB^n&=&\prod_{i=0}^{n}\frac{rA-q^{n+1-i}rC}{(1-q)}. \label{EqA^nB^ninACUr}
\end{eqnarray}
The relations \eqref{mainrel2Ur} and \eqref{mainrel4Ur} may be generalized into
    \begin{eqnarray}
        A^nC&=&q^{n}CA^n, \label{EqA^nCUr} \\
        AC^n&=& q^{n}C^nA, \label{EqAC^nUr}  \\
        C^nB&=&q^nBC^n+\{n\}_qrC^n,  \label{EqC^nBUr}   \\
       CB^n&=&\sum_{i=0}^{n}\binom{n}{i}r^{n-i}q^{i}B^{i}C \label{EqCB^nUr}, 
    \end{eqnarray}
using induction. For some important proofs that shall come later, we will need a generalization of \eqref{EqA^nCUr}--\eqref{EqCB^nUr} which is in the following.

\begin{proposition} \label{propoA^nC^mandC^mB^nUr}
  For any $n, m \in \Z^+$, 
  \begin{eqnarray}
A^nC^m&=&q^{nm}C^mA^n,  \label{EqA^nC^ms1summationUr} \\
C^mB^n&=&\sum_{i=0}^{n}\binom{n}{i}q^{mn-mi}(\{m\}_qr)^iB^{n-i}C^m.\label{EqC^mB^ns1summationUr}
  \end{eqnarray}
\end{proposition}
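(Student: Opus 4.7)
The plan is a pair of routine inductions that lean entirely on the identities already established in \eqref{EqA^nCUr}--\eqref{EqCB^nUr}.

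For the first identity \eqref{EqA^nC^ms1summationUr}, I would fix $n$ and induct on $m$. The base case $m=1$ is precisely \eqref{EqA^nCUr}. For the inductive step, assuming $A^nC^m = q^{nm}C^mA^n$, I would write
\[
A^nC^{m+1} \;=\; (A^nC^m)C \;=\; q^{nm}C^m(A^nC),
\]
apply \eqref{EqA^nCUr} once more to replace $A^nC$ by $q^nCA^n$, and collect the scalars to obtain $q^{n(m+1)}C^{m+1}A^n$. This step is purely mechanical.

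For the second identity \eqref{EqC^mB^ns1summationUr}, I would fix $m$ and induct on $n$. The base case $n=1$ matches \eqref{EqC^nBUr} after noting that the sum on the right reduces to the two terms $q^m B C^m$ (from $i=0$) and $\{m\}_q r C^m$ (from $i=1$). For the inductive step, I would use \eqref{EqC^nBUr} in the form $C^mB = q^mBC^m + \{m\}_q r C^m$ to factor
\[
C^mB^{n+1} \;=\; (C^mB)B^n \;=\; q^m B\,(C^mB^n) \;+\; \{m\}_q r\,(C^mB^n),
\]
and then substitute the inductive hypothesis for $C^mB^n$ into both summands. This yields two binomial sums, the first in the monomials $B^{n+1-i}C^m$ (from the $q^mB$ prefactor) and the second in $B^{n-i}C^m$. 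I would then reindex the second sum via $j=i+1$ so that both sums are in the monomials $B^{n+1-i}C^m$ with matching $q^{m(n+1)-mi}$ exponents, and apply Pascal's identity $\binom{n}{i}+\binom{n}{i-1}=\binom{n+1}{i}$ to merge the two sums into the desired expression at $n+1$.

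The only real obstacle is bookkeeping in this last step: one must check that the boundary contributions at $i=0$ and $i=n+1$, which appear in only one of the two sums before merging, reproduce $\binom{n+1}{0}=1$ and $\binom{n+1}{n+1}=1$ with the correct powers of $q$ and $\{m\}_q r$. This is straightforward but requires care with the indices.
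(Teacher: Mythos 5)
Your proof is correct and follows essentially the same route as the paper: induction using the one-step relations \eqref{EqA^nCUr} and \eqref{EqC^nBUr}, with the binomial/Pascal bookkeeping that the paper leaves as ``routine computations.'' The only (cosmetic) difference is that in the inductive step for \eqref{EqC^mB^ns1summationUr} you peel off the factor as $C^mB^{n+1}=(C^mB)B^n$, whereas the paper writes $(C^mB^n)B$ and then applies \eqref{EqC^nBUr} to the trailing $C^mB$; both merge identically.
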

\begin{proof} The first identity may be obtained, by routine induction, from \eqref{EqA^nCUr}--\eqref{EqAC^nUr}. We proceed with the second identity. Let $m\in \Z^+$. The case $n=1$ is simple equivalent to \eqref{EqC^nBUr}. If \eqref{EqC^mB^ns1summationUr} is true for some $n\in \Z^+$, then $C^mB^{n+1}=\left(C^mB^n\right)B=\left(\sum_{i=0}^{n}\binom{n}{i}q^{mn-mi}(\{m\}_qr)^iB^{n-i}\right)(C^mB)$, where we use \eqref{EqC^nBUr} on the last parenthetical expression. After some routine computations, \eqref{EqC^mB^ns1summationUr} may be shown to be true for $n+1$. By induction, we get the desired result.
\end{proof}

Having introduced the above relations in $\Ur$ that shall be important in our proofs and computations, we now show an important consequence of Theorem~\ref{theobasisfor3gen}.

\begin{corollary}\label{rCor} If $r\neq 1$, then there is no algebra homomorphism $\Ur\into\UrOne$ that sends $A\mapsto A$ and $B\mapsto rB$. %Analogously, there is no algebra homomorphism $\Us\into\UsOne$ that sends $A\mapsto sA$ and $B\mapsto B$.
\end{corollary}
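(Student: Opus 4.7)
The plan is to proceed by contradiction. Suppose an algebra homomorphism $\varphi : \Ur \to \UrOne$ with $\varphi(A) = A$ and $\varphi(B) = rB$ exists. The first step is to determine the image of the auxiliary generator $C = AB - BA$ of Lemma~\ref{lemmapresentation1}: by multiplicativity, $\varphi(C) = A(rB) - (rB)A = r(AB - BA) = rC$ in $\UrOne$. Hence $\varphi$ is completely determined on the three-generator presentation of $\Ur$, with the simple ``rescaling rule'' $A \mapsto A$, $B \mapsto rB$, $C \mapsto rC$.

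Next, I would apply $\varphi$ to the defining relation $AB - qBA = rA$ of $\Ur$. Using $\UrOne$'s relation $AB - qBA = A$, the image $r(AB - qBA) = rA$ collapses to the tautology $rA = rA$, yielding no information. To extract a non-trivial constraint, I would instead apply $\varphi$ to one of the auxiliary identities \eqref{proprel1}--\eqref{proprel5} in Proposition~\ref{proppresentation3gen}, or to an $A^nB^n$-type identity derived from \eqref{EqA^nB^ninACUr} or Proposition~\ref{propoA^nC^mandC^mB^nUr}. The strategy is that the image, once re-expressed in the basis $\{B^lC^m,\, C^mA^t\}$ of $\UrOne$ guaranteed by Theorem~\ref{theobasisfor3gen}, should produce on linearly independent basis elements coefficients that are distinct polynomials in $r$ on the two sides of the identity. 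Linear independence then forces a scalar identity of the form $(r-1)X = 0$ or $r^a = r^b$ with $a \ne b$ and $X$ a nonzero basis element. Combined with Assumption~\ref{qFassume} (in particular, that $q$ is not a root of unity, which is used to clear denominators in the normal-form reductions of \eqref{proprel5}), this forces $r = 1$, contradicting the hypothesis $r \ne 1$.

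The main obstacle is locating the correct witness identity. Because $\varphi$ rescales both $B$ and $C$ uniformly by the same factor $r$ while fixing $A$, most of the relations of Proposition~\ref{proppresentation3gen} transform uniformly under $\varphi$ and produce tautologies after dividing out the common $r$-weight. A successful witness must carry asymmetric $A$-versus-$C$ (or $A$-versus-$B$) content, so that its normal-form reduction in $\UrOne$ genuinely uses the defining relation $AB - qBA = A$ (rather than its homogeneous scaling $r(AB - qBA) = rA$), thereby producing on the $\UrOne$-side an $r$-weight of $1$ on a basis element where the $\varphi$-side carries a higher power of $r$. The cleanest such candidate is likely to be the identity \eqref{proprel5} for small $k$, or the $n=1,2$ cases of the $A^nB^n$ expansion, where the mixed $A^t$- and $C^m$-terms in the normal form produce the needed mismatch.
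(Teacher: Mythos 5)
Your proposal stops exactly where the proof would have to start: you reduce everything to finding a single ``witness'' identity whose image under the putative map $\nonHomo$, computed two ways and expanded in the basis of Theorem~\ref{theobasisfor3gen}, yields incompatible coefficients, but you never exhibit one; you only list candidates that are ``likely'' to work. The paper's proof consists precisely of such a computation: it takes the element $CA^2\cdot BC$, reduces it inside $\Ur$ to a combination of $C^2A^2$ and $C^3A$ and then applies $\nonHomo$ (giving \eqref{nonHomo1}), separately computes $\nonHomo\lpar CA^2\rpar\cdot\nonHomo\lpar BC\rpar$ inside $\UrOne$ using $AB=\frac{A-qC}{1-q}$ there (giving \eqref{nonHomo2}), and equates the coefficients of $C^3A$ to obtain $r^3(r-1)=0$. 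Without an explicit witness and the two completed normal-form computations, your argument establishes nothing; that missing step is the entire content of the corollary.

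Moreover, your own preliminary observation---that every relation transforms uniformly under $A\mapsto A$, $B\mapsto rB$, $C\mapsto rC$ and collapses to a tautology---is not merely an obstacle to route around; taken seriously, it shows the search for a witness cannot succeed. In $\UrOne$ one has $A\cdot(rB)-q(rB)\cdot A=r\lpar AB-qBA\rpar=rA$, so the proposed images satisfy the defining relation of $\Ur$; by the universal property of the presentation (the same routine argument used in Lemma~\ref{lemmapresentation1}) the assignment therefore extends to an algebra homomorphism, and every identity valid in $\Ur$ is carried to a valid identity in $\UrOne$, leaving no coefficient mismatch to be found. Consistently with this, if you redo the paper's own reduction you get $CA^2\cdot BC=\frac{rq^2}{1-q}C^2A^2-\frac{q^3}{1-q}C^3A$ in $\Ur$ (the $C^3A$ term carries no factor $r$), hence $\nonHomo\lpar CA^2\cdot BC\rpar=\frac{r^3q^2}{1-q}C^2A^2-\frac{r^3q^3}{1-q}C^3A$, which coincides with \eqref{nonHomo2}; the discrepancy $r^4$ versus $r^3$ in \eqref{nonHomo1} comes from a spurious factor of $r$ in that intermediate coefficient. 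So the gap in your proposal is not fillable along the lines you sketch (nor along the paper's own lines once the coefficient is corrected), and the honest conclusion of your analysis should have been to question whether such a witness can exist at all, rather than to assert that one is likely to.
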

\begin{proof} We prove this by contraposition. Suppose that such a homomorphism $\nonHomo$ exists. An immediate routine consequence is that $\nonHomo(C)=rC$. Using the relations \eqref{mainrel1Ur} and \eqref{EqA^nC^ms1summationUr} in $\Ur$, we have\linebreak $CA^2\cdot BC=CA(AB)C=CA\frac{rA-qC}{1-q}C=\frac{r}{1-q}C(A^2C)-\frac{q}{1-q}C(AC^2)=\frac{rq^2}{1-q}C^2A^2-\frac{rq^3}{1-q}C^3A$, and so,
\begin{eqnarray}
\nonHomo\lpar CA^2\cdot BC\rpar = \frac{r^3q^2}{1-q}C^2A^2-\frac{r^4q^3}{1-q}C^3A.\label{nonHomo1}
\end{eqnarray}
In the codomain $\UrOne$, we have the computations $\nonHomo\lpar CA^2\rpar\cdot \nonHomo\lpar BC\rpar=rCA^2\cdot r^2BC=r^3CA(AB)C$, where the reordering formula for $AB$ in $\UrOne$ is $AB=\frac{A-qC}{1-q}$. By this identity, and also \eqref{EqA^nC^ms1summationUr},
\begin{eqnarray}
\nonHomo\lpar CA^2\rpar\cdot \nonHomo\lpar BC\rpar = \frac{r^3q^2}{1-q}C^2A^2-\frac{r^3q^3}{1-q}C^3A.\label{nonHomo2}
\end{eqnarray}
Since $\nonHomo$ is a homomorphism, the right-hand sides of \eqref{nonHomo1} and \eqref{nonHomo2} are equal, and by the linear independence of $C^2A^2$ and $C^3A$ from Theorem~\ref{theobasisfor3gen}, we may equate the scalar coefficients of $C^3A$ in \eqref{nonHomo1} and \eqref{nonHomo2} to obtain $r^3(r-1)=0$. Since $r$ is assumed to be nonzero, $r=1$.
\end{proof}

Corollary~\ref{rCor} gives us justification to proceed with retaining the parameter $r$ in the generalized $q$-deformed relation $AB-qBA=rA$, because, unlike in the undeformed case when the relations $AB-BA=rA$ (with $r\notin\{0,1\}$) and $AB-BA=A$ will lead to isomorphic algebras because of a homomorphism that performs $A\mapsto A$ and $B\mapsto rB$, Corollary~\ref{rCor} implies that this is not possible for the $q$-deformed case (with $q\neq 1$). 

\begin{theorem}\label{theoB^yC^kA^w}  
Any element of $\Ur$ of the form 
 \begin{eqnarray}
       B^y C^kA^w \nonumber
    \end{eqnarray}
where $k, w, y \in \Z^+$ is an element of $\Span\{B^lC^m, C^mA^l\  :\   l\in \N,\  m \in \Z^+\}$.
\end{theorem}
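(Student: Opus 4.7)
The plan is to induct on the exponent $y$, using the reordering relation \eqref{mainrel5Ur} which rewrites $BC^kA$ as a linear combination of $C^kA$ and $C^{k+1}$. Since $y\geq 1$ and $w\geq 1$, one can factor $B^yC^kA^w=B^{y-1}(BC^kA)A^{w-1}$, then substitute using \eqref{mainrel5Ur} to obtain an expression of the form
$$B^yC^kA^w=\frac{r}{1-q}\,B^{y-1}C^kA^w-\frac{1}{q^k(1-q)}\,B^{y-1}C^{k+1}A^{w-1}.$$
Each use of this identity strictly decreases the $B$-exponent by one, while shifting the $A$- and $C$-exponents in a controlled way, so the induction on $y$ is well-suited to drive the argument.

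For the base case $y=1$, the identity above exhibits $BC^kA^w$ directly as a linear combination of $C^kA^w$ and $C^{k+1}A^{w-1}$. The first summand is of the form $C^mA^l$ with $l,m\in\Z^+$, and the second is of the same form when $w\geq 2$; when $w=1$ the factor $A^{w-1}$ collapses to the empty word, leaving $C^{k+1}$, which is still $C^mA^l$ with $l=0\in\N$. In either case the base case lies in $\Span\{B^lC^m,C^mA^l\,:\,l\in\N,\,m\in\Z^+\}$.

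For the inductive step $y\geq 2$, the first summand $B^{y-1}C^kA^w$ is of exactly the required form and is covered by the inductive hypothesis. For the second summand $B^{y-1}C^{k+1}A^{w-1}$, the inductive hypothesis applies whenever $w\geq 2$; the only boundary case is $w=1$, in which $A^{w-1}=I$ and the summand collapses to $B^{y-1}C^{k+1}$, which is already a basis element $B^lC^m$ with $l=y-1\in\N$ and $m=k+1\in\Z^+$. In every subcase, $B^yC^kA^w$ is exhibited as a linear combination of elements in the claimed spanning set, completing the induction.

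There is no substantive obstacle here; the proof is a single-parameter induction whose sole mechanism is the rewriting rule \eqref{mainrel5Ur}. The only bookkeeping care needed is the boundary case $w=1$, where the trailing factor $A^{w-1}$ disappears so that the recursion terminates at a $B^lC^m$ basis element rather than continuing as a genuine $B^{y-1}C^{k+1}A^{w-1}$ to which the inductive hypothesis would be reapplied.
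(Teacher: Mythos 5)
Your proof is correct. Both you and the paper ultimately run on the single relation \eqref{mainrel5Ur}, but the organization is genuinely different: you prove the statement (quantified over all $k,w\in\Z^+$) by one induction on the $B$-exponent $y$, via the identity
\begin{equation*}
B^yC^kA^w \;=\; B^{y-1}\bigl(BC^kA\bigr)A^{w-1} \;=\; \tfrac{r}{1-q}\,B^{y-1}C^kA^{w}\;-\;\tfrac{1}{q^{k}(1-q)}\,B^{y-1}C^{k+1}A^{w-1},
\end{equation*}
with the single boundary case $w=1$, where the second term collapses to the spanning element $B^{y-1}C^{k+1}$. The paper instead proceeds in stages: it first shows $BC^kA^w\in\Span\{C^mA^l\}$ by induction on $w$, then handles $B^wC^kA^w$ by a diagonal induction, then $B^wC^kA^{w+n}$, and finally the case where the $B$-exponent exceeds the $A$-exponent by a separate induction on the excess $t$. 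Your route buys brevity and uniformity: one recursion, one boundary case, and the inductive hypothesis applied at shifted parameters $(k+1,w-1)$, which is legitimate since your hypothesis is quantified over all $k,w$. The paper's staged argument, while longer, makes explicit along the way the sharper fact that when the $B$-exponent does not exceed the $A$-exponent the element lies in the smaller span $\Span\{C^mA^l\}$ (their $\scS_0$), information your recursion also yields implicitly (the branch producing $B^lC^m$ terms with $l\geq 1$ can only fire when the running $B$-exponent exceeds the running $A$-exponent) but which you do not need and do not state. No gap in your argument.
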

\begin{proof} Let $\scS:=\Span\{B^lC^m, C^mA^l\  :\   l\in \N,\  m \in \Z^+\}$, and let $k\in\Z^+$. Elements of the form $BC^kA$ are in $\scS$ because of the relation \eqref{mainrel5Ur}. In particular, $BC^kA\in\scS_0:=\Span\{C^mA^l\  :\   l\in \N,\  m \in \Z^+\}$. If, for some $w\in\Z^+$, we have $BC^kA^w\in\scS_0$, then we have an equation that expresses $BC^kA^w$ as a linear combination of basis elements of the form $C^mA^l$. We multiply both sides of this equation by $A$ from the right, and so, $BC^kA^{w+1}$ is a linear combination of elements of the form $C^mA^{l+1}$, and this proves that $BC^kA^{w+1}\in\scS_0$. By induction, $BC^kA^w\in\scS_0$, for all $w\in\Z^+$. We perform another induction, with the statement $BC^kA\in \scS_0$ as the basis step. Suppose that for some $w\in\Z^+$, we have $B^wC^kA^w\in\scS_0$. Thus, $B^wC^kA^w$ is a linear combination of elements of the form $C^mA^l$, and consequently, $B^{w+1}C^kA^{w+1}$ is a linear combination of elements of the form $BC^mA^{l+1}$, which have been proven earlier to be elements of $\scS_0$. By induction, $B^wC^kA^w\in\scS_0$ for all $w\in\Z^+$. An analogous induction argument may be used to show that $B^wC^kA^{w+n}\in\scS_0$ for all $n\in\N$. The remaining case is when the exponent of $B$ is strictly greater than the exponent of $A$ in the word $B^y C^kA^w$. We may write such a word as $B^{w+t} C^kA^w$, where $t\in\Z^+$. If $t=1$, then using the fact that $B^wC^kA^w\in\scS_0$, the element $B^wC^kA^w$ is a linear combination of elements of the form $C^mA^l$. Consequently, $B^{w+1}C^kA^w$ is a linear combination of elements of the form $BC^mA^l$, which have been proven earlier to be in $\scS_0\sub\scS$. If, for some $t\in\Z^+$, we have $B^{w+t} C^kA^w\in\scS$, then this element is a linear combination of elements of the form $B^\lambda C^\mu$ and $C^mA^l$. Thus, $B^{w+t+1} C^kA^w$ is a linear combination of elements of the form $B^{\lambda+1} C^\mu$ and $BC^mA^l$. The latter have been proven earlier to be in $\scS_0\sub\scS$, while the former are spanning set elements of $\scS$. Therefore, $B^{w+t+1} C^kA^w\in\scS$. This completes the proof.
\end{proof}

\begin{theorem} \label{theoC^kA^WB^yUr}
Given $k, w, y \in \Z^+$ and $t\in\N$,
\begin{eqnarray}
C^kA^wB^yC^t = \sum_{i=0}^{y}\binom{y}{i}q^{wy-wi}(\{w\}_qr)^i\sum_{t=0}^{y-i}\binom{y-i}{t}q^{k(y-i-t)+tw}(\{k\}_qr)^tB^{y-i-t}C^{k+t} A^w. \label{EqC^kA^wB^yasBCAterms0}
\end{eqnarray}
\end{theorem}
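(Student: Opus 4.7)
The plan is to derive the identity by a two-stage reordering of the word $C^kA^wB^yC^t$: first push $A^w$ past $B^y$, then push $C^k$ past what remains of the $B$'s, coalescing the $C$-powers at the end. All the heavy lifting is already packaged in Proposition \ref{propoA^nC^mandC^mB^nUr} together with one auxiliary identity on $A^wB^y$.

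First I would establish, by induction on $y$ using \eqref{EqA^nBlemmaUr} (that is, $A^wB=q^wBA^w+\{w\}_qrA^w$), the closed form
\begin{eqnarray}
A^wB^y &=& \sum_{i=0}^{y}\binom{y}{i}q^{wy-wi}(\{w\}_qr)^i\,B^{y-i}A^w. \nonumber
\end{eqnarray}
The inductive step from $y$ to $y+1$ multiplies on the right by $B$ and reapplies \eqref{EqA^nBlemmaUr} to each tail $B^{y-i}A^wB=q^wB^{y-i+1}A^w+\{w\}_qrB^{y-i}A^w$, after which Pascal's identity $\binom{y}{i-1}+\binom{y}{i}=\binom{y+1}{i}$ assembles the binomial coefficients at level $y+1$. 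This is exactly the outer sum in the stated identity.

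Next I would substitute the expansion into $C^kA^wB^yC^t$, yielding
\begin{eqnarray}
C^kA^wB^yC^t &=& \sum_{i=0}^{y}\binom{y}{i}q^{wy-wi}(\{w\}_qr)^i\,C^kB^{y-i}A^wC^t. \nonumber
\end{eqnarray}
On each summand I would swap $A^w$ past $C^t$ via \eqref{EqA^nC^ms1summationUr}, picking up the factor $q^{wt}$, and then expand $C^kB^{y-i}$ by \eqref{EqC^mB^ns1summationUr}, which generates the inner sum with summation index (call it $t$, as in the theorem statement) carrying $\binom{y-i}{t}$, $q^{k(y-i-t)}$, $(\{k\}_qr)^t$ and leaving the tail $B^{y-i-t}C^k$. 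Merging $C^k\cdot C^t=C^{k+t}$ and collecting the three sources of $q$-powers into the single exponent $wy-wi+k(y-i-t)+tw$ produces the right-hand side as displayed.

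The main obstacle, and essentially the only one, is bookkeeping: one has to verify that the three independent sources of prefactors — the outer $A$-over-$B$ reshuffle, the $A^w$-over-$C^t$ swap, and the inner $C^k$-over-$B^{y-i}$ reshuffle — combine cleanly into the single scalar $q^{wy-wi+k(y-i-t)+tw}(\{w\}_qr)^i(\{k\}_qr)^t$, and that the ranges of the two summation indices $i\in\{0,\ldots,y\}$ and $t\in\{0,\ldots,y-i\}$ are preserved throughout. A minor typographic care is needed because the inner summation variable in the statement is also named $t$ and therefore shadows the fixed exponent of $C^t$ on the left-hand side. No deeper structural input beyond Proposition \ref{propoA^nC^mandC^mB^nUr} and the auxiliary $A^wB^y$ formula is required; induction on $y$ for the auxiliary identity is the only non-substitutional step.
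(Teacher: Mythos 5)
Your proposal is correct and follows essentially the same route as the paper: the paper likewise proves the outer expansion (in the form $C^kA^wB^y=\sum_{i=0}^{y}\binom{y}{i}q^{wy-wi}(\{w\}_qr)^iC^kB^{y-i}A^w$, i.e.\ with $C^k$ already riding along) by induction on $y$ from \eqref{EqA^nBlemmaUr}, then substitutes \eqref{EqC^mB^ns1summationUr}, multiplies by $C^t$ on the right, and commutes $A^wC^t=q^{wt}C^tA^w$ via \eqref{EqA^nC^ms1summationUr}. Your observation about the inner summation index shadowing the fixed exponent $t$ is also apt, as the same clash occurs in the paper's own statement.
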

\begin{proof} From \eqref{EqA^nBlemmaUr}, we find that $A^wB=q^{w}BA^w +\{w\}_qr A^w$. Multiplying both sides by $C^k$ we find that the identity   
\begin{eqnarray}
C^kA^wB^y&=&\sum_{i=0}^{y}\binom{y}{i}q^{wy-wi}(\{w\}_qr)^iC^kB^{y-i}A^w, \label{EqC^kA^wB^y}
  \end{eqnarray} 
is true in $\Ur$ for the case $y=1$. The proof of \eqref{EqC^kA^wB^y} may be completed by induction, with some aid again from the equation $A^wB=q^{w}BA^w +\{w\}_qr A^w$.

Using equation \eqref{EqC^mB^ns1summationUr} in Proposition \ref{propoA^nC^mandC^mB^nUr}, we have
\begin{eqnarray}
 C^kB^{y-i}&=&\sum_{t=0}^{y-i}\binom{y-i}{t}q^{k(y-i)-kt}(\{k\}_qr)^tB^{y-i-t}C^k,\nonumber   
\end{eqnarray}
which we subtitute into \eqref{EqC^kA^wB^y} to obtain 
\begin{eqnarray}
C^kA^wB^y &=&\sum_{i=0}^{y}\binom{y}{i}q^{wy-wi}(\{w\}_qr)^i\sum_{t=0}^{y-i}\binom{y-i}{t}q^{k(y-i)-kt}(\{k\}_qr)^tB^{y-i-t}C^k A^w,\nonumber %\label{EqC^kA^wB^yasBCAterms}
\end{eqnarray}
both sides of which, we multiply by $C^t$ from the right. The resulting right-hand side is a linear combination of words of the form $B^{y-i-t}C^k A^wC^t$ where the subword $A^wC^t$ may be replaced by $q^{wt}C^tA^w$ using \eqref{EqA^nC^ms1summationUr}. The result is \eqref{EqC^kA^wB^yasBCAterms0}, as desired. 
\end{proof}

\section{\normalsize Constructing Lie polynomials in $A, B$ of $\Ur$}

Let $\LiesubUr$ denote the Lie subalgebra of $\Ur$ generated by $A$ and $B$. Fix an element $U \in \LiesubUr$. We recall the linear map $\ad U$ that sends $V \mapsto [U,V]$ for any $V \in \LiesubUr$.
We exhibit some important elements of the Lie subalgebra $\LiesubUr$ generated by $A$ and $B$.

%\begin{proposition}\label{propoadA^nA&dB^nUr}
% For any $n \in \Z^+$,
%\begin{eqnarray}
   % (\ad A)^n C&=&(q-1)^nCA^n, \label{EqadA^nCUr}\\
    %(\ad B)^n C&=&\sum_{i=0}^{n}(-1)^{i}\binom{n}{i}(1-q)^{n-i}r^iB^{n-i}C, \label{EqadB^nCUr} 
%\end{eqnarray}
%\end{proposition}
%\begin{proof} With the aid of the relations \eqref{proprel2} and \eqref{proprel4}, use induction on $n$.
%\end{proof}

%\begin{proposition}\label{propoB^nC,CA^nUr}
 % For any $n \in \N$,
%\begin{eqnarray}
 %   B^{n+1}C,\quad CA^n \in \LiesubUr.
%\end{eqnarray}
%\end{proposition}
%\begin{proof} Notice that $CA^n\in \LiesubUr$ for any $n\in \N$ follows from the fact that $C=[A,B]$ and equation \eqref{EqadA^nCUr} of Proposition~\ref{propoadA^nA&dB^nUr}. Now we prove that $B^{n+1}C\in \LiesubUr$ by induction on $n$. Using Proposition~\ref{propoadA^nA&dB^nUr}, $(\ad B)C=(1-q)BC-rC$. This equation implies that $BC\in \LiesubUr$ since we already have $C\in\LiesubUr$. Hence, $ B^{n+1}C \in \LiesubUr$ when $n=0$. Suppose that for some $j \in \Z^+$, we have $ B^{j+1}C\in \LiesubUr$. Routine computations that involve \eqref{proprel4} may be used to prove that $(\ad B)(B^{j+1}C)=(1-q)B^{j+2}C-rB^{j+1}C$, or equivalently, $(1-q)B^{j+2}C = (\ad B)(B^{j+1}C)+rB^{j+1}C$. Thus, $B^{j+2}C$ can be written as linear combination of elements in $\LiesubUr$. This suggests that $B^{n+1} \in \LiesubUr$ is also true for $n=j+1$. By induction on $n$, $ B^{n+1}C \in \LiesubUr$ for any $n \in \N$. 
%\end{proof}

\begin{proposition}\label{nestAdProp}
  For any $m \in \Z^+$,
  \begin{eqnarray}
        (\ad C)^m A&=&(1-q)^mC^mA, \label{EqadC^mAUr}\\
    (\ad C)^m B&=&(q-1)^mBC^m+(q-1)^{m-1}rC^m. \label{EqadC^mBUr}
   \end{eqnarray} 
\end{proposition}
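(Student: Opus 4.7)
The plan is to prove both identities by straightforward induction on $m$, relying only on the reordering relations $AC=qCA$ from \eqref{mainrel2Ur} and $CB=qBC+rC$ from \eqref{mainrel4Ur}.

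For \eqref{EqadC^mAUr}, the base case $m=1$ is immediate: $(\ad C)A = CA - AC = CA - qCA = (1-q)CA$. For the inductive step, assuming $(\ad C)^mA = (1-q)^m C^m A$, I would compute
\begin{eqnarray}
(\ad C)^{m+1}A = \lbrak C,(1-q)^m C^mA\rbrak = (1-q)^m\lpar C^{m+1}A - C^m(AC)\rpar, \nonumber
\end{eqnarray}
and then apply $AC=qCA$ to get $C^m(AC)=qC^{m+1}A$, which yields $(1-q)^{m+1}C^{m+1}A$ as desired.

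For \eqref{EqadC^mBUr}, the base case $m=1$ follows from $\lbrak C,B\rbrak = CB - BC = (qBC+rC)-BC = (q-1)BC + rC$, which matches the formula since $(q-1)^{m-1}=(q-1)^0=1$ at $m=1$. For the inductive step, assuming the formula holds at $m$, I would expand
\begin{eqnarray}
(\ad C)^{m+1}B &=& \lbrak C, (q-1)^m BC^m + (q-1)^{m-1}rC^m\rbrak \nonumber \\
&=& (q-1)^m\lpar CBC^m - BC^{m+1}\rpar, \nonumber
\end{eqnarray}
where the commutator with $C^m$ vanishes, and then substitute $CB=qBC+rC$ into $CBC^m$ to obtain $(q-1)^m\lpar(q-1)BC^{m+1} + rC^{m+1}\rpar$, which is precisely the claim at $m+1$.

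Neither induction presents a genuine obstacle; the only thing to be careful about is bookkeeping of the exponent of $(q-1)$ in the second identity, where the $rC^m$ term carries exponent $m-1$ while the $BC^m$ term carries exponent $m$, and verifying that the $r$-coefficient picks up exactly one extra factor of $(q-1)$ per step from the reduction of $CBC^m$, rather than from the commutator with the second summand. Once this is tracked correctly, both identities follow immediately from the two basic reordering relations for $AC$ and $CB$ in $\Ur$.
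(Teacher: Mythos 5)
Your proof is correct and follows exactly the paper's approach: induction on $m$ using the relations $AC=qCA$ and $CB=qBC+rC$ from \eqref{mainrel2Ur} and \eqref{mainrel4Ur}, with the exponent bookkeeping handled properly. Nothing further is needed.
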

\begin{proof} Use induction on $m$, with routine computations that involve the relations \eqref{mainrel2Ur} and \eqref{mainrel4Ur}.
\end{proof}

\begin{lemma} \label{lemmaC^minLie}
For any $m\in\Z^+$, the following holds in $\Ur$:
\begin{eqnarray}
    q^{m}[C^mA, B]&=&\{m+1\}_qC^{m+1}. \label{Eq[B,C^mA]}
\end{eqnarray}
\end{lemma}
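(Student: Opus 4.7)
The plan is to establish the identity by a direct computation rather than induction, exploiting the fact that $C^m$ commutes with $A$ up to a power of $q$ (Proposition \ref{propoA^nC^mandC^mB^nUr}) and the closed-form expression for $BC^mA$ already available in Proposition \ref{proppresentation3gen}.

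First, I would expand
\begin{eqnarray}
[C^mA, B] \;=\; C^mAB - BC^mA. \nonumber
\end{eqnarray}
For the first summand, since $C^m$ is a scalar away from being central with respect to $A$, I would simply pull $A$ past and invoke relation \eqref{mainrel1Ur} with $s=0$, obtaining
\begin{eqnarray}
C^mAB \;=\; C^m\cdot\frac{rA-qC}{1-q} \;=\; \frac{rC^mA-qC^{m+1}}{1-q}. \nonumber
\end{eqnarray}
For the second summand, relation \eqref{mainrel5Ur} directly yields
\begin{eqnarray}
BC^mA \;=\; \frac{q^mrC^mA-C^{m+1}}{q^m(1-q)}. \nonumber
\end{eqnarray}

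The next step is the subtraction. After putting both fractions over the common denominator $q^m(1-q)$, the $C^mA$ contributions cancel, leaving only multiples of $C^{m+1}$:
\begin{eqnarray}
[C^mA, B] \;=\; \frac{(1-q^{m+1})\,C^{m+1}}{q^m(1-q)}. \nonumber
\end{eqnarray}
Finally, the $q$-special identity \eqref{qspecialeq5} with $z=q$ and $n=m+1$ gives $\frac{1-q^{m+1}}{1-q}=\{m+1\}_q$, so multiplying through by $q^m$ produces the claimed equality \eqref{Eq[B,C^mA]}.

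I do not expect any real obstacle here: the lemma is a one-shot calculation, and the only potential pitfall is a bookkeeping slip when clearing denominators or applying \eqref{qspecialeq5}. Notably, no induction on $m$ is required, because the relations \eqref{mainrel1Ur} and \eqref{mainrel5Ur} already encode the inductive content for us. The lemma will then be used as input for later results expressing $C^{m+1}$ as a Lie polynomial in $A$ and $B$, since Assumption \ref{qFassume} guarantees that $\{m+1\}_q$ and $q^m$ are both nonzero and the identity can be solved for $C^{m+1}$.
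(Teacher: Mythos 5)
Your computation is correct and is exactly the paper's argument: the paper's proof likewise expands $[C^mA,B]=C^mAB-BC^mA$ and applies relations \eqref{mainrel1Ur} and \eqref{mainrel5Ur}, with the same cancellation of the $C^mA$ terms and the identity \eqref{qspecialeq5} yielding $\{m+1\}_q$. The aside about ``pulling $A$ past $C^m$'' is unnecessary (you only substitute for the subword $AB$), but it does not affect the validity of the proof.
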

\begin{proof} Use \eqref{mainrel1Ur} and \eqref{mainrel5Ur} on $ [C^mA, B]=C^mAB-BC^mA$.
\end{proof}

\begin{proposition}\label{PropoelementsofLie1Ur}
For any $m\in \Z^+$,
\begin{eqnarray}
    C^m, BC^m, C^mA \in \LiesubUr.
\end{eqnarray} 
\end{proposition}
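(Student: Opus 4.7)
The plan is to bootstrap out of three ingredients already on the table: the $(\ad C)^m$ formulas from Proposition~\ref{nestAdProp}, the bracket identity from Lemma~\ref{lemmaC^minLie}, and the fact that under Assumption~\ref{qFassume} the scalars $1-q$, $q-1$, $q^m$, and $\{n\}_q=\frac{1-q^n}{1-q}$ (see \eqref{qspecialeq5}) are all nonzero. Note first that $C=[A,B]\in\LiesubUr$ by definition, so $\ad C$ maps $\LiesubUr$ to itself, and iterates of $\ad C$ applied to $A$ or $B$ remain in $\LiesubUr$.

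My first step will be to extract $C^mA$. By Proposition~\ref{nestAdProp}, $(\ad C)^mA=(1-q)^mC^mA$, and since the left side lies in $\LiesubUr$ and $(1-q)^m\neq 0$, dividing gives $C^mA\in\LiesubUr$ for every $m\in\Z^+$. Next I will handle the pure powers $C^m$ by feeding the elements $C^mA$ into Lemma~\ref{lemmaC^minLie}: since $C^mA$ and $B$ are both in $\LiesubUr$, so is $[C^mA,B]$, and the identity $q^m[C^mA,B]=\{m+1\}_qC^{m+1}$ combined with $\{m+1\}_q\neq 0$ and $q^m\neq 0$ yields $C^{m+1}\in\LiesubUr$ for every $m\in\Z^+$. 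Together with the base case $C^1=C\in\LiesubUr$, this shows $C^m\in\LiesubUr$ for all $m\in\Z^+$.

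Finally, for $BC^m$ I will use the second identity of Proposition~\ref{nestAdProp}, namely $(\ad C)^mB=(q-1)^mBC^m+(q-1)^{m-1}rC^m$. The left side is in $\LiesubUr$, and by the previous step $C^m\in\LiesubUr$, so subtracting $(q-1)^{m-1}rC^m$ from both sides leaves $(q-1)^mBC^m\in\LiesubUr$; dividing by the nonzero scalar $(q-1)^m$ gives $BC^m\in\LiesubUr$, completing the proof. There is no real obstacle here apart from verifying that every scalar we divide by is nonzero, which is exactly what Assumption~\ref{qFassume} guarantees; the inductive work is already hidden inside Proposition~\ref{nestAdProp} and Lemma~\ref{lemmaC^minLie}, so the argument reduces to three short bracket computations arranged in the correct order.
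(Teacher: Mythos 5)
Your proof is correct and follows essentially the same route as the paper: extract $C^mA$ from $(\ad C)^mA=(1-q)^mC^mA$, obtain the powers $C^{m}$ via Lemma~\ref{lemmaC^minLie}, and then recover $BC^m$ by subtracting a known multiple of $C^m$ and dividing by a nonzero scalar, all of which is legitimate under Assumption~\ref{qFassume}. The only cosmetic difference is that for $BC^m$ you reuse the second identity of Proposition~\ref{nestAdProp}, whereas the paper brackets $B$ with $C^m$ directly, using $[B,C^m]=(1-q^m)BC^m-\{m\}_q rC^{m}$; the two steps are equivalent in substance.
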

\begin{proof} Equation \eqref{EqadC^mAUr} and Lemma \ref{lemmaC^minLie} imply that all elements of the form $C^m$ for any $m\in \Z^+$ are in $\LiesubUr$. Routine use of \eqref{EqA^nCUr}--\eqref{EqC^nBUr} yields $[C^m, A]=(1-q^m)C^mA$, $[B, C^m]=(1-q^m)BC^m-\{m\}_q rC^{m}$. Isolating the terms with $BC^m$ and $C^mA$, respectively, we find that $BC^m, C^mA \in \LiesubUr$ since $C^m$, $[C^m, A]$, $[B, C^m]  \in \LiesubUr$.
\end{proof}

\begin{lemma} \label{lemmaadA(CmA&BCmUr}
Fix $m \in \Z^+$. For any $n\in \Z^+$, 
\begin{eqnarray}
    (-\ad A)^n(C^mA)&=&(1-q^m)^nC^mA^{n+1}, \label{EqadA^nCmAUr} \\
    (\ad B)^n(BC^m)&=&\left((1-q^m)B-\{m\}_qr\right)^nBC^m. \label{EqadA^nBCmUr}
\end{eqnarray}
\end{lemma}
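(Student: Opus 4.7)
The plan is to prove both identities by induction on $n$, with the base case $n=1$ being a direct application of the reordering relations already established in the section.

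For the first identity, the base step reduces to computing
\[
(-\ad A)(C^mA) = -AC^mA + C^mA^2.
\]
The key tool is the special case $AC^m = q^m C^m A$ of \eqref{EqA^nC^ms1summationUr} (equivalently \eqref{EqA^nCUr} with $n=1$), which gives $AC^mA = q^m C^m A^2$, so the right-hand side simplifies to $(1-q^m)C^mA^2$. For the inductive step, observe that the same reordering shows that, for \emph{any} $k\in\Z^+$,
\[
(-\ad A)(C^mA^k) = -AC^mA^k + C^mA^{k+1} = (1-q^m)\,C^m A^{k+1}.
\]
Thus, if \eqref{EqadA^nCmAUr} holds for some $n$, applying $-\ad A$ once more and pulling out the scalar $(1-q^m)^n$ yields the case $n+1$.

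For the second identity, the base step computes
\[
(\ad B)(BC^m) = B^2C^m - BC^mB.
\]
Using $C^mB = q^m BC^m + \{m\}_q r\,C^m$ from \eqref{EqC^nBUr}, we get $BC^mB = q^m B^2C^m + \{m\}_q r\,BC^m$, and subtracting produces $\bigl((1-q^m)B - \{m\}_q r\bigr)BC^m$. Denote $\alpha := (1-q^m)B - \{m\}_q r$. For the inductive step, suppose $(\ad B)^n(BC^m) = \alpha^n BC^m$. Since $\alpha$ is a polynomial in $B$, it commutes with $B$, so $[B,\alpha^n]=0$; using the derivation property $(\ad B)(XY) = [B,X]Y + X[B,Y]$, we obtain
\[
(\ad B)^{n+1}(BC^m) = (\ad B)(\alpha^n BC^m) = \alpha^n (\ad B)(BC^m) = \alpha^n\cdot\alpha BC^m = \alpha^{n+1}BC^m,
\]
completing the induction.

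I do not anticipate a serious obstacle here: both inductions are driven by a single ``one-step'' reordering identity, and Proposition~\ref{propoA^nC^mandC^mB^nUr} (via \eqref{EqA^nCUr} and \eqref{EqC^nBUr}) supplies exactly what is needed. The only place requiring mild care is the second identity's inductive step, where one must recognize that $\alpha^n$ is a polynomial in $B$ alone, so that $\ad B$ passes through it as a scalar does; this is what allows the factor $\alpha$ to accumulate cleanly on the left of $BC^m$ at each stage.
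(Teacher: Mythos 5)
Your proof is correct and takes essentially the same route as the paper's, which simply invokes the one-step reorderings $AC^m=q^mC^mA$ and $C^mB=q^mBC^m+\{m\}_q r\,C^m$ together with induction on $n$. The only nit is a label slip: the identity $AC^m=q^mC^mA$ is \eqref{EqAC^nUr} with exponent $m$ (or, as you also note, \eqref{EqA^nC^ms1summationUr} with $n=1$), not \eqref{EqA^nCUr} with $n=1$.
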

\begin{proof} Use \eqref{EqAC^nUr} and \eqref{EqC^nBUr} and induction on $n$. 
\end{proof}

\begin{theorem}\label{TheThm}
The following elements 
\begin{eqnarray}
    A, B, B^nC^m, C^mA^k,\qquad (m, k \in \Z^+, n \in \N), \label{basisforLUr}
\end{eqnarray}
form a basis for Lie algebra $\LiesubUr$.
\end{theorem}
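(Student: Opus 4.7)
The plan is to set $\scS$ to be the linear span of the proposed basis elements, and then to establish three things: (i) every listed element lies in $\LiesubUr$; (ii) $\scS$ is closed under the Lie bracket, so that $\LiesubUr\subseteq\scS$; and (iii) the listed elements are linearly independent. For (iii), note that $A=C^0A^1$, $B=B^1C^0$, and each $B^nC^m$ and $C^mA^k$ with $m,k\in\Z^+$ and $n\in\N$ appears as a distinct element of the basis of $\Ur$ supplied by Theorem~\ref{theobasisfor3gen}, so linear independence in $\LiesubUr$ is automatic.

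For (i), Proposition~\ref{PropoelementsofLie1Ur} already gives $C^m,\ BC^m,\ C^mA\in\LiesubUr$ for every $m\in\Z^+$. To reach $C^mA^k$ for $k\geq 2$, apply Lemma~\ref{lemmaadA(CmA&BCmUr} in the form $(-\ad A)^{k-1}(C^mA)=(1-q^m)^{k-1}C^mA^k$ and divide by $(1-q^m)^{k-1}$, which is nonzero by Assumption~\ref{qFassume}. To reach $B^nC^m$ for $n\geq 1$, the same lemma yields
\[
(\ad B)^n(BC^m)\ =\ \sum_{j=0}^{n}\binom{n}{j}(1-q^m)^{j}(-\{m\}_q r)^{n-j}\,B^{j+1}C^m,
\]
and, letting $n$ range over $\Z^+$, one inverts the resulting upper-triangular linear system (whose diagonal entries are the nonzero scalars $(1-q^m)^n$) to express each $B^{n+1}C^m$ as an $\F$-linear combination of elements already in $\LiesubUr$.

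For (ii), it suffices to check every bracket between a pair of spanning elements. The direct cases are $[A,B]=C$; $[A,C^mA^k]=(q^m-1)C^mA^{k+1}$ via \eqref{mainrel2Ur}; $[C^mA^k,C^{m'}A^{k'}]=(q^{km'}-q^{mk'})C^{m+m'}A^{k+k'}$ via \eqref{EqA^nC^ms1summationUr}; $[B,B^nC^m]=(1-q^m)B^{n+1}C^m-\{m\}_q r\,B^nC^m$ via \eqref{EqC^nBUr}; and the brackets $[B^nC^m,B^{n'}C^{m'}]$ which reduce to $\F$-linear combinations of $B^j C^{m+m'}$ via \eqref{EqC^mB^ns1summationUr}. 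The remaining cases $[A,B^nC^m]$, $[B,C^mA^k]$, and $[B^nC^m,C^{m'}A^k]$ all produce mixed words of the shape $B^yC^{\mu}A^w$ or $C^{\mu}A^w B^yC^{m}$; the first kind is rewritten via Theorem~\ref{theoB^yC^kA^w}, the second via Theorem~\ref{theoC^kA^WB^yUr}, with outputs consisting only of terms $B^lC^{\mu}$ and $C^{\mu}A^l$ that already lie in $\scS$.

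The main obstacle is closure under the mixed bracket $[B^nC^m,C^{m'}A^k]$, in which the product $C^{m'}A^k B^nC^m$ genuinely tangles all three letters $A$, $B$, $C$; this is precisely why Theorem~\ref{theoC^kA^WB^yUr} was proved in advance, and invoking it here collapses the expression to a linear combination of basis elements of $\scS$. Combining (i) and (ii) yields the equality $\LiesubUr=\scS$, while (iii) upgrades the spanning set to a basis, completing the argument.
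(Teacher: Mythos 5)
Your proposal is correct and takes essentially the same route as the paper's proof: span the listed words, get linear independence from Theorem~\ref{theobasisfor3gen}, get $\scS\subseteq\LiesubUr$ from Proposition~\ref{PropoelementsofLie1Ur} and the iterated-adjoint lemma (your explicit division by $(1-q^m)^{k-1}$ and the triangular-system inversion merely spell out what the paper leaves to that citation), and verify bracket-closure case by case, reducing the mixed words by the two reordering theorems. One tiny point worth tidying: in $[A,B^nC^m]$ the term $AB^nC^m$ carries no positive power of $C$ on the left, so the theorem you invoke for words of the shape $C^{\mu}A^wB^yC^{m}$ does not literally cover it; as in the paper's relation \eqref{2}, one first commutes $A$ past $B^n$ (the $s=0$ reordering of $AB^n$), after which the resulting words $B^{n-i}C^mA$ are handled by the result you already cite for the shape $B^yC^kA^w$.
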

\begin{proof}
Let $\scK$ be the span of the elements in \eqref{basisforLUr}. Notice that elements in $\scK$ are linearly independent as they are basis elements of $\Ur$. To show that $\scK$ is equal to $\LiesubUr$, we only have to show that the following conditions are satisfied:
\begin{enumerate}
    \item\label{LieSubI} $A, B \in \scK$,
    \item\label{LieSubII} $\scK$ is a Lie subalgebra of $\Ur$,
    \item\label{LieSubIII} $\scK \subseteq \LiesubUr$.
\end{enumerate}
 The condition \ref{LieSubI} immediately follows from the definition of $\scK$. For condition \ref{LieSubII}, we show that $\scK$ is closed under the Lie bracket operation. That is, we show that for any basis elements $L, R$ of $\scK$ from\eqref{basisforLUr}, $[L, R]$ is a linear combination of the elements in \eqref{basisforLUr}. Given $t, u, v, w \in \Z^+$ and $x, y \in \N$, routine use of \eqref{EqA^nCUr}--\eqref{EqCB^nUr} and Proposition~\ref{propoA^nC^mandC^mB^nUr} gives us
\begin{eqnarray}
\left[C^u, C^t \right]&=&0,\label{4''} \\
  \left[C^u, A\right] &=&(1-q^u)C^uA, \label{2'}  \\
\left[ C^uA^w, A\right]&=&(1-q^u)C^uA^{w+1},\label{5} \\
  \left[C^uA^w, C^t \right]&=& (q^{tw}- 1)C^{t+u}A^w, \label{7'}  \\
\left[C^uA^w, C^tA^v \right]&=&(q^{tw}-q^{uv})C^{t+u}A^{v+w}, \label{8}  \\
\left[C^u, B \right]&=&(q^u-1)BC^u+\{u\}_q rC^{u}, \label{3'}\\
\left[B^tC^u, B \right]&=&(q^u-1)B^{t+1}C^u+\{u\}_q rB^tC^{u}, \label{3}\\
  \left[C^u, B^wC^t \right] &=&\sum_{i=0}^{w}\binom{w}{i}q^{uw-ui}(\{u\}_qr)^iB^{w-i}C^{t+u}- B^wC^{t+u}, \label{4'} \\
   \left[B^vC^u, B^wC^t \right]  &=&\sum_{i=0}^{w}\binom{w}{i}q^{uw-ui}(\{u\}_qr)^iB^{v+w-i}C^{t+u} \nonumber \\
    &&-\sum_{i=0}^{v}\binom{v}{i}q^{tv-ti}(\{t\}_qr)^iB^{w+v-i}C^{t+u},   \label{4} \\
\left[ B^tC^u, A\right] &=&B^tC^uA-\sum_{i=0}^{t}\binom{t}{i}q^{t+u-i}r^{i}B^{t-i}C^uA, \label{2}\\
 \left[ C^uA^w, B\right]&=&C^uA^wB-BC^uA^w, \label{6}  \\
  \left[C^uA^w, B^yC^t \right]&=& C^uA^wB^yC^t- B^yC^{t+u}A^w.\label{7}
\end{eqnarray}
The commutation relations of the basis elements of $\scK$ in \eqref{basisforLUr} are summarized in the following table.

\begin{center}
\begin{tabular}{ |c||c|c|c|c| } 
 \hline
 $[ \cdot ,\cdot ]$ & $A$ & $B$ & $B^yC^t$ &$C^t A^v$ \\ 
 \hline\hline
  $A$ & $0$ &  & &   \\ 
 \hline 
 $B$ & $-C$ & $0$ & &    \\ 
 \hline 
 $B^xC^u$ & \eqref{2}, \eqref{2'} & \eqref{3}, \eqref{3'} & \eqref{4}, \eqref{4'}, \eqref{4''}    &   \\ 
 \hline 
 $C^u A^w$  & \eqref{5} & \eqref{6} &\eqref{7}, \eqref{7'} & \eqref{8}  \\ 
 \hline 
\end{tabular}
\end{center}

For each of the right-hand sides of the relations \eqref{4''}--\eqref{4}, we find that the result of the Lie bracket is a linear  combination of \eqref{basisforLUr}, and is hence in $\scK$. For the last three relations \eqref{2}--\eqref{7}, we use Theorems~\ref{theoB^yC^kA^w} and \ref{theoC^kA^WB^yUr} to deduce that the right-hand sides are also linear  combinations of \eqref{basisforLUr}. Hence, condition \ref{LieSubII} is satisfied.  

To prove \ref{LieSubIII}, we show that every basis element of $\scK$ is in $\LiesubUr$. The basis elements $A, B, C$ have this property by the definition of $\LiesubUr$. The rest of the basis elements of $\scK$ are in $\LiesubUr$ because of Lemma~\ref{lemmaadA(CmA&BCmUr}. At this point, we have proven $\scK\subseteq \Liesub$. This completes the proof.
\end{proof}

\section{\normalsize Lie polynomial characterization in $\Us$}

We now consider the case $r=0$ and $s\neq 0$ for the algebra $\U$. That is, the algebra $\Us$ with a presentation by generators $A, B$ and relation
\begin{eqnarray}
  AB-qBA=sB. \label{rel1Us}  
\end{eqnarray}
The Lie subalgebra of $\Us$ generated by $A$ and $B$ shall be denoted by $\LiesubUs$.  In this section, we will be comparing properties of the algebras $\Uss$ and $\Us$, and of the corresponding Lie subalgebras $\LiesubUss$ and $\LiesubUs$. First we look at the algebra structure. In particular, our goal is to show that there is a Lie algebra isomorphism $\LiesubUss\into\LiesubUs$ that sends $A\mapsto\beta B$ and $B\mapsto\alpha A$, for some nonzero $\alpha,\beta\in\F$. To accomplish this, we start with the analogs of the relations \eqref{mainrel1Ur}--\eqref{mainrel5Ur} for $\Ur$, which are
\begin{eqnarray}
    AB&=&\frac{sB-qC}{1-q}, \label{mainrel1Us}\\
    AC&=&qCA+sC, \label{mainrel2Us} \\
    BA&=&\frac{sB-C}{1-q}, \label{mainrel3Us}\\
    CB&=&qBC, \label{mainrel4Us}\\
    BC^kA&=&\frac{q^ksBC^{k} -C^{k+1}}{q^{k}(1-q)},\quad (k\in \Z^+), \label{mainrel5Us} 
    \end{eqnarray}
for $\Us$. By routine induction, the relations \eqref{mainrel2Us} and \eqref{mainrel4Us} may be generalized into
\begin{eqnarray}
A^nC^m&=&\sum_{i=0}^{n}\binom{n}{i}q^{mn-mi}(\{m\}_qs)^iC^mA^{n-i}, \label{EqA^nC^mUs} \\
C^mB^n&=&q^{mn}B^{n}C^m,\label{EqC^mB^nUs}
  \end{eqnarray}
respectively, for any $m,n\in\Z^+$. Routine induction using \eqref{rel1Us} yields
\begin{eqnarray}
%A^nB&=&\sum_{t=0}^{n}\binom{n}{t}q^{n-t}s^{t}BA^{n-t}, \mbox{and} \label{EqA^nBUs} \\
AB^n&=&q^nB^{n}A+\{n\}_qsB^{n},\quad(n\in\Z^+). \label{EqAB^nUs}
\end{eqnarray}

An analog of Corollary~\ref{rCor}, which was for $\Ur$, will now be stated and proven for $\Us$. This serves as justification for retaining the parameter $s$ in $AB-qBA=sB$.

\begin{corollary}\label{sCor} If $s\neq 1$, then there is no algebra homomorphism $\Us\into\UsOne$ that sends $A\mapsto sA$ and $B\mapsto B$.
\end{corollary}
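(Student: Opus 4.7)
The plan is to mimic the proof of Corollary~\ref{rCor}, invoking the $A\leftrightarrow B$, $r\leftrightarrow s$ symmetry between the defining relations \eqref{mainrel1Ur}--\eqref{mainrel5Ur} of $\Ur$ and their $\Us$ counterparts \eqref{mainrel1Us}--\eqref{mainrel5Us}. I would argue by contraposition: suppose that an algebra homomorphism $\nonHomo:\Us\into\UsOne$ with $\nonHomo(A)=sA$ and $\nonHomo(B)=B$ exists, and aim to derive $s=1$. A first routine step yields $\nonHomo(C)=\nonHomo(AB-BA)=(sA)B-B(sA)=s(AB-BA)=sC$.

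Next I would pick a test element paralleling $CA^2\cdot BC$ from Corollary~\ref{rCor}; under the $A\leftrightarrow B$ swap the natural analog is $CB^2\cdot AC$. Using \eqref{mainrel3Us} to rewrite the interior $BA$ and iterating $CB=qBC$ (as encoded in \eqref{EqC^mB^nUs}) to move $C$'s past $B$'s, I would reduce $CB^2\cdot AC$ inside $\Us$ to an explicit $\F$-linear combination of the basis elements $B^2C^2$ and $BC^3$ from Theorem~\ref{theobasisfor3gen}. Applying $\nonHomo$ term-by-term to this combination then gives $\nonHomo(CB^2\cdot AC)$ as a specific $\F$-linear combination of $B^2C^2$ and $BC^3$ in $\UsOne$, with each $C$-power contributing an additional factor of $s$ via $\nonHomo(C)=sC$. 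Independently, I would compute $\nonHomo(CB^2)\cdot\nonHomo(AC)=sCB^2\cdot s^2AC=s^3\,CB^2\cdot AC$ as an element of $\UsOne$, and apply the same reductions there using the $\UsOne$ versions of the relations (effectively their $s=1$ specializations) to obtain a second such combination.

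Since $\nonHomo$ is an algebra homomorphism, these two expressions must coincide, and the linear independence supplied by Theorem~\ref{theobasisfor3gen} lets me equate the coefficient of, say, $BC^3$. Under Assumption~\ref{qFassume} the resulting equation should simplify to $s^k(s-1)=0$ for some $k\in\Z^+$, forcing $s=1$ as claimed.

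The main obstacle will be to verify that the two coefficient expansions actually differ. On the $\nonHomo(CB^2\cdot AC)$ side, the scalar $s$ enters both through the $\Us$-reduction (which invokes \eqref{mainrel3Us} and \eqref{mainrel5Us}, each carrying an explicit $s$) and through the $\nonHomo(C)=sC$ substitutions, whereas on the $\nonHomo(CB^2)\cdot\nonHomo(AC)$ side the parallel $\UsOne$-reduction carries no $s$, and the outside factor of $s^3$ accumulates only from the images of $A$ and $C$. Demonstrating that these two $s$-weights disagree by exactly one power, which is the analog of the $r^3$ versus $r^4$ step in Corollary~\ref{rCor}, is the heart of the argument; should this particular choice of test element produce coincident $s$-weights, I would instead try $C^2B\cdot AC$ or $CB\cdot AC^2$ to introduce an additional asymmetric application of \eqref{mainrel2Us} on one side only, which is what makes the $r$-versus-$s$ bookkeeping genuinely nontrivial.
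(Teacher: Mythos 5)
Your plan runs into exactly the obstacle you flagged in your last paragraph, and that obstacle is fatal rather than a matter of choosing a cleverer test element. Carrying out your reduction in $\Us$ gives $CB^2\cdot AC=\frac{q^2s}{1-q}B^2C^2-\frac{q}{1-q}BC^3$, hence $\nonHomo\lpar CB^2AC\rpar=\frac{q^2s^3}{1-q}B^2C^2-\frac{qs^3}{1-q}BC^3$, while $\nonHomo\lpar CB^2\rpar\,\nonHomo\lpar AC\rpar=s^3\lpar\frac{q^2}{1-q}B^2C^2-\frac{q}{1-q}BC^3\rpar$; the two expansions coincide, so no condition on $s$ falls out. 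The same will happen for $C^2B\cdot AC$, $CB\cdot AC^2$, and every other test element, because the assignment $A\mapsto sA$, $B\mapsto B$ genuinely preserves the defining relation: in $\UsOne$ one has $(sA)B-qB(sA)-sB=s(AB-qBA-B)=0$, so by the universal property of the presentation of $\Us$ the assignment extends to an algebra homomorphism $\Us\into\UsOne$ (indeed an isomorphism, with inverse $A\mapsto s^{-1}A$, $B\mapsto B$). Consequently no coefficient comparison of the kind you propose can ever produce $s^k(s-1)=0$.

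This is not a defect of your proposal alone: the statement of Corollary~\ref{sCor} is contradicted by the homomorphism just exhibited, and the paper's one-line proof, like the proof of Corollary~\ref{rCor} that you are imitating, rests on a computational slip. In Corollary~\ref{rCor} the correct reduction of $CA^2\cdot BC$ in $\Ur$ is $\frac{rq^2}{1-q}C^2A^2-\frac{q^3}{1-q}C^3A$; the factor $r$ in the displayed term $-\frac{rq^3}{1-q}C^3A$ is spurious, and removing it makes \eqref{nonHomo1} and \eqref{nonHomo2} agree identically, so the equation $r^3(r-1)=0$ never arises. The analogous slip is what makes the coefficient of $B^2C^2$ in $\nonHomo\lpar CA\cdot B^2C\rpar-\nonHomo\lpar CA\rpar\nonHomo\lpar B^2C\rpar$ appear nonzero in the paper's proof of Corollary~\ref{sCor}; computed correctly it vanishes for every $s$. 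The genuine nonexistence phenomenon in this family is Corollary~\ref{noIsoCor}, where the two generators are interchanged rather than rescaled; a rescaling of a single generator always transports $AB-qBA=sB$ onto $AB-qBA=B$, so neither your argument nor the paper's can establish the claim as stated.
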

\begin{proof} If such a map $\nonHomo$ exists, then from $\nonHomo\lpar CA\cdot B^2C\rpar-\nonHomo\lpar CA\rpar\nonHomo\lpar B^2C\rpar=0$, routine computations, using the relations \eqref{mainrel1Us} and \eqref{EqC^mB^nUs}, may be made to show that the basis element $B^2C^2$ appears on the left-hand side. We note here that \eqref{mainrel1Us} applies to computations in the domain, but this becomes $AB=\frac{B-qC}{1-q}$ for computations in the codomain. By a linear independence argument, equating the coefficient of $B^2C^2$ to zero yields $s=1$.\blitza 
\end{proof}
%\begin{corollary}\label{noIsoCor} There is no algebra homomorphism $\UrOne\into\UsOne$ that sends $A\mapsto B$ and $B\mapsto -A$.
%\end{corollary}
%\begin{proof} This follows from Proposition~\ref{UssUsProp}, by contraposition.
%\end{proof}
%\begin{proof} If such a homomorphism exists, then, taking the image of the relation $AB-qBA-A=0$ in $\UrOne$ results to $-BA+qAB-B=0$ in $\UsOne$, where the reordering formula is $AB=qBA+B$, by which, we obtain $(q^2-1)BA+(q-1)B=0$. By Assumption~\ref{qFassume}, $q$ is not a root of unity, so we may divide both sides by $q^2-1$ to obtain $BA+\frac{1}{q+1}B=0$, but by Theorem~\ref{theobasisfor3gen}, $BA$ and $B$ are linearly independent in $\UsOne$.\blitza 
%\end{proof}
\begin{corollary}\label{noIsoCor} For any nonzero $s\in\F$, there is no algebra homomorphism $\Uss\into\Us$ that sends $A\mapsto\beta B$ and $B\mapsto\alpha A$, where $\alpha,\beta\in\F$ are nonzero.
\end{corollary}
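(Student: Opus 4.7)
The plan is to argue by contradiction, and to exploit the fact that, unlike in Corollaries~\ref{rCor} and \ref{sCor}, where the candidate maps preserve the defining relations of their source algebras and the obstruction must be sought in longer words such as $CA^2\cdot BC$, here the swap $A\leftrightarrow B$ together with the rescaling by $\alpha,\beta$ already fails to respect the source relation of $\Uss$ itself. Consequently the contradiction will appear at the level of the defining relation, and no elaborate testing on products of basis elements will be required.

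Concretely, I would suppose that such an algebra homomorphism $\nonHomo:\Uss\into\Us$ with $\nonHomo(A)=\beta B$ and $\nonHomo(B)=\alpha A$ exists, and then apply $\nonHomo$ to the defining relation $AB-qBA=sA$ of $\Uss$. This produces the required identity
\begin{eqnarray*}
\alpha\beta\lpar BA-qAB\rpar \;=\; s\beta B
\end{eqnarray*}
in $\Us$. Using \eqref{mainrel1Us} and \eqref{mainrel3Us} to express $AB$ and $BA$ in the basis of $\Us$ from Theorem~\ref{theobasisfor3gen}, a short calculation shows $BA-qAB=sB-(1+q)C$. Substituting this into the identity above and dividing by the nonzero scalar $\beta$ yields
\begin{eqnarray*}
s(\alpha-1)B-\alpha(1+q)C \;=\; 0.
\end{eqnarray*}

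By Theorem~\ref{theobasisfor3gen}, the elements $B$ and $C$ are linearly independent in $\Us$, so both coefficients must vanish. The equation $s(\alpha-1)=0$, together with $s\neq 0$, forces $\alpha=1$, while $\alpha(1+q)=0$ combined with $\alpha\neq 0$ forces $q=-1$; but this contradicts Assumption~\ref{qFassume}, which rules out $q$ being a root of unity. The only step requiring any real bookkeeping is the reduction of $BA-qAB$ to a combination of basis elements; there is no deeper obstacle. It is worth noting that the hypotheses $\alpha\neq 0$, $\beta\neq 0$, $s\neq 0$, and Assumption~\ref{qFassume} each play a visible and indispensable role in closing the contradiction.
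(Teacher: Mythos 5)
Your proposal is correct and follows essentially the same route as the paper: apply the putative homomorphism to the defining relation $AB-qBA=sA$ of $\Uss$, rewrite the image inside $\Us$, and derive a contradiction from the linear independence guaranteed by Theorem~\ref{theobasisfor3gen} together with Assumption~\ref{qFassume}. The only (cosmetic) difference is that you expand fully into the basis elements $B$ and $C$ and close the argument via $q=-1$, whereas the paper keeps the expression in terms of $BA$ and $B$, divides by $q^{2}-1$, and invokes their linear independence directly.
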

\begin{proof} Suppose such an algebra homomorphism exists, and apply this to both sides of the relation\linebreak $AB-qBA=sA$ for the algebra $\Uss$ to obtain $\alpha\beta BA-q\alpha\beta AB-s\beta B=0$, on which, we use the reordering formula $AB=qBA+sB$ in the algebra $\Us$. The result is $(q^2-1)BA+\frac{s}{\alpha}\cdot (\alpha q+1) B=0$. By Assumption~\ref{qFassume}, $q$ is not a root of unity, so we may divide both sides by $q^2-1$ to obtain $BA+\frac{s}{\alpha}\cdot\frac{\alpha q+1}{q^2-1} B=0$, but by Theorem~\ref{theobasisfor3gen}, $BA$ and $B$ are linearly independent.\blitza
\end{proof}
The above corollary includes the case with $s=1$, $\alpha=-1$ and $\beta=1$, which serves as our proof of what was claimed earlier in the introduction that the algebra homomorphism $\uLieAone\into\uLieBone$ in \eqref{notMap1}, that sends $A\mapsto B$ and $B\mapsto-A$, has no counterpart for the $q$-deformed algebras.

\begin{theorem} \label{theoA^WB^yC^kUs}
Given $k, w, y \in \Z^+$ and $t\in\N$,
\begin{eqnarray}
C^tA^yB^wC^k &=&\sum_{i=0}^{y}\binom{y}{i}q^{wy-wi}(\{w\}_qs)^i\sum_{t=0}^{{y-i}}\binom{{y-i}}{t}q^{k(y-i-t)+tw}(\{k\}_qs)^tB^wC^{k+t}A^{{y-i}-t}.  \nonumber
\end{eqnarray}
\end{theorem}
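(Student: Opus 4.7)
The approach mirrors the proof of Theorem \ref{theoC^kA^WB^yUr} for $\Ur$, with the roles of $A$ and $B$ (and of $r$ and $s$) essentially interchanged. The engine of that proof was the one-step reordering $A^w B = q^w B A^w + \{w\}_q r A^w$; in $\Us$ the analogous engine is the identity \eqref{EqAB^nUs}, namely $A B^w = q^w B^w A + \{w\}_q s B^w$. The plan is to induct on $y$ using this identity, then move $C^t$ past the resulting $B^w$ block using \eqref{EqC^mB^nUs}, and finally expand the subwords $A^{y-i} C^k$ via \eqref{EqA^nC^mUs}.

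First, I would prove by induction on $y$ the intermediate identity
\[
C^t A^y B^w \;=\; \sum_{i=0}^{y}\binom{y}{i}q^{w(y-i)}(\{w\}_q s)^i\, C^t B^w A^{y-i}.
\]
The base case $y=1$ is \eqref{EqAB^nUs} multiplied by $C^t$ on the left. For the inductive step, I would compute $C^t A^{y+1} B^w = (C^t A^y)(A B^w)$, substitute $A B^w = q^w B^w A + \{w\}_q s B^w$, apply the inductive hypothesis to each of the two resulting terms $C^t A^y B^w A$ and $C^t A^y B^w$, and collapse the two sums via Pascal's identity $\binom{y}{i} + \binom{y}{i-1} = \binom{y+1}{i}$.

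Next, I would use \eqref{EqC^mB^nUs}, which in the form $C^t B^w = q^{tw} B^w C^t$ lets me push the $C^t$ block past the $B^w$ block inside each summand, yielding
\[
C^t A^y B^w \;=\; \sum_{i=0}^{y}\binom{y}{i}q^{w(y-i)+tw}(\{w\}_q s)^i\, B^w C^t A^{y-i}.
\]
Finally, I would multiply both sides by $C^k$ from the right; the subwords $A^{y-i} C^k$ are expanded via \eqref{EqA^nC^mUs}, the juxtaposition $C^t C^k$ collapses to $C^{t+k}$, and, after renaming the inner summation index to match the notation in the statement, the resulting double sum is precisely the claimed expression.

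The main bookkeeping hurdle is keeping the outer parameter $t$ distinct from the inner summation index (which in the displayed right-hand side of the statement is also written $t$), especially in the Pascal-style collapse of the first step and in the final substitution from \eqref{EqA^nC^mUs}, where the factor $q^{tw}$ (outer $t$) must not be confused with the factors indexed by the inner summation variable. Beyond this indexing subtlety, every step is a routine application of one of the three reordering identities \eqref{EqAB^nUs}, \eqref{EqC^mB^nUs}, and \eqref{EqA^nC^mUs}.
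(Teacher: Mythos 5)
Your proposal is correct and takes essentially the same route as the paper's proof: both rest on the identity \eqref{EqAB^nUs} with induction on $y$, then the commutations \eqref{EqC^mB^nUs} and \eqref{EqA^nC^mUs}, differing only in the immaterial order in which the spectator factors $C^t$ (left) and $C^k$ (right) are attached and moved. You also rightly flag the clash between the outer parameter $t$ and the inner summation index $t$, which is present in the paper's own statement (whose proof, incidentally, says ``multiply by $C^t$ from the right'' where left multiplication is clearly intended).
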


\begin{proof} From \eqref{EqAB^nUs}, we have $AB^w=q^{w}B^wA +\{w\}_qs B^w$. Multiplying both sides by $C^k$ from the right, we find that the equation
  \begin{eqnarray}
A^yB^wC^k&=& \sum_{i=0}^{y}\binom{y}{i}q^{wy-wi}(\{w\}_qs)^iB^wA^{y-i}C^k, \label{EqC^kA^wB^yUs}
  \end{eqnarray}
holds in $\Us$ at $y=1$. Proceeding with induction on $y$, with further use of \eqref{EqAB^nUs}, the proof of \eqref{EqC^kA^wB^yUs} may be completed. From \eqref{EqA^nC^mUs}, we obtain
\begin{eqnarray}
A^{y-i}C^k&=&\sum_{t=0}^{{y-i}}\binom{{y-i}}{t}q^{k(y-i)-kt}(\{k\}_qs)^tC^kA^{{y-i}-t}, \nonumber
\end{eqnarray}
which we substitute into \eqref{EqC^kA^wB^yUs} to obtain
\begin{eqnarray}
A^yB^wC^k &=&\sum_{i=0}^{y}\binom{y}{i}q^{wy-wi}(\{w\}_qs)^i\sum_{t=0}^{{y-i}}\binom{{y-i}}{t}q^{ky-ki-kt}(\{k\}_qs)^tB^wC^kA^{{y-i}-t},  \nonumber
\end{eqnarray}
both sides of which we mutiply by $C^t$ from the right. In the resulting right-hand side, we replace $C^tB^w$ by $q^{wt}B^wC^t$ using \eqref{EqC^mB^nUs}. From this, we get the desired identity.
\end{proof}

We shall relate the Lie algebra $\LiesubUs$ to the corresponding Lie subalgebra $\LiesubUr$ of $\Ur$. We set $r=s$, and so, we shall be dealing with the Lie algebra $\LiesubUss$. A basis for this Lie algebra has been given in Theorem~\ref{TheThm}, which consists of words in the algebra $\Uss$. These words have their counterparts in the algebra $\Us$, which are also shown in \eqref{basisforLUs} below. We define $\scKii$ as the span of the elements
\begin{eqnarray}
    A, B, B^nC^m, C^mA^k,\qquad (m, k \in \Z^+, n \in \N), \label{basisforLUs}
\end{eqnarray}
of $\Us$, and we define $\LieIso:\LiesubUss \longrightarrow \scKii$ to be the linear map  that sends
\begin{eqnarray}
  A&\mapsto& -B, \nonumber \\
  B&\mapsto&-A, \nonumber \\
  C^n&\mapsto&-C^n, \nonumber \\
   C^uA^v&\mapsto&-B^vC^u, \nonumber \\
   B^vC^u&\mapsto&-C^uA^v. \label{missingcase}
\end{eqnarray}

\begin{lemma}\label{LemBCA} For each $k, w, y \in \Z^+$,
\begin{eqnarray}
       \LieIso\lpar B^y C^kA^w\rpar=-B^wC^kA^y. \nonumber
    \end{eqnarray}
\end{lemma}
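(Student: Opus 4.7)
The plan is to induct on $y\in\Z^+$, with $k\in\Z^+$ fixed and all $w\in\Z^+$ handled simultaneously at each stage. The backbone of both the base case and the inductive step is a one-step \emph{peeling identity} that is available in parallel in $\Uss$ and $\Us$. Using \eqref{mainrel5Ur} (with $r$ replaced by $s$) together with the factorization $B^yC^kA^w=B^{y-1}(BC^kA)A^{w-1}$, one obtains in $\Uss$
\begin{eqnarray*}
B^y C^k A^w = \frac{s}{1-q}\,B^{y-1}C^kA^w - \frac{1}{q^k(1-q)}\,B^{y-1}C^{k+1}A^{w-1},
\end{eqnarray*}
whereas \eqref{mainrel5Us} and the analogous factorization yield in $\Us$
\begin{eqnarray*}
B^w C^k A^y = \frac{s}{1-q}\,B^wC^kA^{y-1} - \frac{1}{q^k(1-q)}\,B^{w-1}C^{k+1}A^{y-1}.
\end{eqnarray*}
These two identities have exactly the same shape once the exponents $y$ and $w$ are swapped, and it is this symmetry that drives the target equality $\LieIso(B^yC^kA^w)=-B^wC^kA^y$.

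For the base case $y=1$, the $\Uss$-identity above exhibits $BC^kA^w$ as an explicit linear combination of the two basis elements $C^kA^w$ and $C^{k+1}A^{w-1}$ of $\LiesubUss$, where the convention $A^0=I$ absorbs the edge case $w=1$ (in which $C^{k+1}A^0=C^{k+1}$). Applying $\LieIso$ termwise via $\LieIso(C^kA^w)=-B^wC^k$ and $\LieIso(C^{k+1}A^{w-1})=-B^{w-1}C^{k+1}$ (the latter reducing to $\LieIso(C^{k+1})=-C^{k+1}$ when $w=1$), and then comparing with $-B^wC^kA$ computed via the $\Us$-identity, one checks that both expressions are the same linear combination of the basis words $B^wC^k$ and $B^{w-1}C^{k+1}$ in $\Us$.

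For the inductive step, assume the lemma for $y-1$ and all $w,k\in\Z^+$. Apply $\LieIso$ to the $\Uss$-peeling identity. The first term is resolved directly by the inductive hypothesis as $\LieIso(B^{y-1}C^kA^w)=-B^wC^kA^{y-1}$. The second term is resolved as $\LieIso(B^{y-1}C^{k+1}A^{w-1})=-B^{w-1}C^{k+1}A^{y-1}$, either by the inductive hypothesis (when $w\ge 2$) or directly from the defining assignment $\LieIso(B^{y-1}C^{k+1})=-C^{k+1}A^{y-1}$ in \eqref{missingcase} (when $w=1$). Substituting these back and invoking the $\Us$-peeling identity closes the induction.

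The main obstacle is purely bookkeeping: one must confirm that under the piecewise definition \eqref{missingcase} the single formula $\LieIso(B^nC^m)=-C^mA^n$ holds uniformly for all $n\in\N$ and $m\in\Z^+$, so that the inductive hypothesis and the defined assignments mesh seamlessly at the boundary $w=1$. Once this is observed, both sides of the target identity arise as the same linear combination of basis elements of $\Us$, and the equality follows.
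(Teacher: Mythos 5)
Your proposal is correct and takes essentially the same route as the paper: your peeling identities are precisely \eqref{mainrel5Uss} and \eqref{mainrel5Us} multiplied on the left by a power of $B$ and on the right by a power of $A$ (the paper's \eqref{mainrel5Uss2}), and your induction on $y$ with the $w=1$ boundary absorbed via \eqref{missingcase} mirrors the paper's argument. The only cosmetic difference is that the paper first records the case $y=1$ (for all $w$) by right-multiplying \eqref{mainrel5Uss} by $A^{w-1}$, which is exactly your base case.
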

\begin{proof} The relation \eqref{mainrel5Ur} for the algebra $\Uss$, that is, with the further restriction of $r=s$, is
\begin{eqnarray}
    BC^kA&=&\frac{q^k sC^{k}A -C^{k+1}}{q^{k}(1-q)},\quad k\in \Z^+. \label{mainrel5Uss} 
    \end{eqnarray}
The corresponding relation for the algebra $\Us$ is \eqref{mainrel5Us}. From these two relations, we immediately have $\LieIso\lpar B C^kA\rpar=-BC^kA$. Multiplying both sides of \eqref{mainrel5Uss} by any desired power of $A$ from the right leads to a straightforward computation that proves $\LieIso\lpar B C^kA^w\rpar=-B^wC^kA$, for all $w\in\Z^+$. Suppose that for some $y\in\Z^+$, we have $\LieIso\lpar B^y C^k A^w\rpar=-B^w C^k A^y$ for all $k,w\in\Z^+$. To both sides of \eqref{mainrel5Uss}, we multiply $B^y$ from the left and $A^{w-1}$ from the right to obtain
\begin{eqnarray}
    B^{y+1}C^kA^w&=&\frac{q^k sB^yC^{k}A^w -B^yC^{k+1}A^{w-1}}{q^{k}(1-q)},\quad k\in \Z^+. \label{mainrel5Uss2} 
    \end{eqnarray}
Applying the linear map $\LieIso$ to both sides of \eqref{mainrel5Uss2}, the resulting right-hand side is a linear combination of $\LieIso\lpar B^yC^{k}A^w\rpar$ and $\LieIso\lpar B^yC^{k+1}A^{w-1}\rpar$, both of which  satisfy the inductive hypothesis, except for the case $w=1$ in $\LieIso\lpar B^yC^{k+1}A^{w-1}\rpar$, but for this, we simply use \eqref{missingcase}. Thus,
\begin{eqnarray}
    \LieIso\lpar B^{y+1}C^kA^w\rpar &=&-\frac{q^k sB^wC^{k}A^y -B^{w-1}C^{k+1}A^{y}}{q^{k}(1-q)},\quad k\in \Z^+. \label{mainrel5Uss3} 
    \end{eqnarray}
Multiplying both sides of \eqref{mainrel5Us} by $B^{w-1}$ from the left and by $A^y$ from the right, and combining with \eqref{mainrel5Uss3}, gives us $\LieIso\lpar B^{y+1} C^k A^w\rpar=-B^w C^k A^{y+1}$. By induction, the desired identity follows.
\end{proof}

\begin{lemma}\label{LemCABC} Given $k, w, y \in \Z^+$ and $t\in\N$,
\begin{eqnarray}
\LieIso\lpar C^kA^yB^wC^t\rpar &=&-C^tA^wB^yC^k.  \nonumber
\end{eqnarray}
\end{lemma}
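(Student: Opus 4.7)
The plan is to reduce the identity to a direct comparison of two reordering formulas already established: Theorem~\ref{theoC^kA^WB^yUr} applied in the algebra $\Uss$ (i.e.\ $\Ur$ with $r=s$), which governs words of the form $C^{k} A^{w} B^{y} C^{t}$ in $\Uss$, and Theorem~\ref{theoA^WB^yC^kUs}, which governs words of the form $C^{t} A^{y} B^{w} C^{k}$ in $\Us$. After interchanging the names of the two variables $w$ and $y$ in Theorem~\ref{theoC^kA^WB^yUr}, the left-hand side $C^{k} A^{y} B^{w} C^{t}$ of the lemma is expanded into a double sum whose summands are scalar multiples of basis words of the form $B^{w-i-j} C^{k+j} A^{y}$, indexed by $0 \leq i \leq w$ and $0 \leq j \leq w-i$, with coefficients built from binomial symbols, powers of $q$, and powers of $\{y\}_{q}s$ and $\{k\}_{q}s$.

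Next I apply $\LieIso$ to this expansion term by term. By Lemma~\ref{LemBCA}, each summand with $w-i-j \geq 1$ satisfies $\LieIso\lpar B^{w-i-j} C^{k+j} A^{y}\rpar = -B^{y} C^{k+j} A^{w-i-j}$; in the boundary case $w-i-j = 0$, the word is $C^{k+j} A^{y}$, and the defining formula \eqref{missingcase} gives $\LieIso\lpar C^{k+j} A^{y}\rpar = -B^{y} C^{k+j}$, in perfect agreement with the previous expression evaluated at $w-i-j = 0$. Therefore, by linearity of $\LieIso$, the image $\LieIso\lpar C^{k} A^{y} B^{w} C^{t}\rpar$ equals the negative of the same double sum, with each term replaced by $B^{y} C^{k+j} A^{w-i-j}$ while retaining the original scalar coefficients.

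Finally, I expand $C^{t} A^{w} B^{y} C^{k}$ in $\Us$ via Theorem~\ref{theoA^WB^yC^kUs}, again after exchanging the roles of $w$ and $y$ in the statement. A direct inspection shows that the resulting double sum has exactly the same indexing set, the same scalar coefficients, and the same basis-word summands $B^{y} C^{k+j} A^{w-i-j}$ as the expansion obtained in the previous step. Matching the two yields $\LieIso\lpar C^{k} A^{y} B^{w} C^{t}\rpar = -C^{t} A^{w} B^{y} C^{k}$, which is precisely the claim. The computation is essentially bookkeeping; the only mild subtlety is handling the boundary term $w-i-j = 0$, where Lemma~\ref{LemBCA} does not literally apply and one must fall back on the defining rule \eqref{missingcase} of $\LieIso$ on words of the form $C^{\mu} A^{\nu}$.
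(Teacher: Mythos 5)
Your argument is correct and follows essentially the same route as the paper's proof: expand the word via Theorem~\ref{theoC^kA^WB^yUr} with $r=s$ (up to the cosmetic relabeling $w\leftrightarrow y$), apply $\LieIso$ termwise using Lemma~\ref{LemBCA}, and identify the resulting linear combination with $-C^tA^wB^yC^k$ through Theorem~\ref{theoA^WB^yC^kUs}. Your explicit handling of the boundary summands with $w-i-j=0$ directly from the definition of $\LieIso$ is a detail the paper leaves implicit, but it does not constitute a different approach.
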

\begin{proof} Using Theorem~\ref{theoC^kA^WB^yUr} on the algebra $\Uss$, or equivalently, setting $r=s$, we have
\begin{eqnarray}
C^kA^wB^yC^t = \sum_{i=0}^{y}\binom{y}{i}q^{wy-wi}(\{w\}_qs)^i\sum_{t=0}^{y-i}\binom{y-i}{t}q^{k(y-i-t)+tw}(\{k\}_qs)^tB^{y-i-t}C^{k+t} A^w,\nonumber  %\label{EqC^kA^wB^yasBCAterms0}
\end{eqnarray}
on both sides of which, we apply the linear map $\LieIso$. The resulting right-hand side is a linear combination of $\LieIso\lpar B^{y-i-t}C^{k+t} A^w\rpar$, each of which, according to Lemma~\ref{LemBCA}, is equal to $-B^{w}C^{k+t} A^{y-i-t}$. By Theorem~\ref{theoA^WB^yC^kUs}, the entire linear combination is simply $-C^tA^wB^yC^k$.
\end{proof}

%In $\Ur$, the relation \eqref{mainrel5Ur} was used to prove Theorem~\ref{theoB^yC^kA^w}. For $\Us$, the relation \eqref{mainrel5Us} may be used in a similar manner to show that 
 %\begin{eqnarray}
    %   B^y C^kA^w \in \Span\{B^lC^m, C^mA^l: l\in \N; m \in \Z^+\},\quad (k, w, y \in \Z^+).\label{BCAinUs}
    %\end{eqnarray}

%The analogs of \eqref{EqA^nB^ninACUr}--\eqref{EqC^mB^ns1summationUr} for $\Us$ are
%\begin{eqnarray} A^nB^n&=&\prod_{i=0}^{n-1}\left( \frac{sB-q^{i+1}C}{1-q}\right), \label{EqA^nB^ninACUs}\\
%AC^n&=&q^nC^nA+\{n\}_q sC^{n},  \label{EqAC^nUs} \\ 
%A^nC&=&\sum_{i=0}^{n}\binom{n}{i}s^{n-i}q^{i}CA^{i}, \label{EqA^nCUs} \\
%C^n B&=& q^nBC^n, \label{EqC^nBUs} \\
%CB^n&=&q^{n}B^{n}C, \label{EqCB^nUs}\\
%A^nC^m&=&\sum_{i=0}^{n}\binom{n}{i}q^{mn-mi}(\{m\}_qs)^iC^mA^{n-i}. \label{EqA^nC^mUs} 
  %\end{eqnarray}

%\boxed{\mbox{TASK: Complete the list of all the relations analogous to those in $\Ur$. Follow the ordering in $\Ur$ strictly.}}

%\boxed{\mbox{ No proofs here. Just the list.}}\\

%\begin{proposition}\label{propoadC^mUs}
  %For any $n \in \Z^+$,
  %\begin{eqnarray}
     %  (\ad C)^n A&=&(1-q)^nC^nA-(1-q)^{n-1}sC^n , \mbox{ and}\label{EqadC^mAUs}\\
    %(\ad C)^n B&=&(q-1)^nBC^n,\label{EqadC^mBUs}\\
    % q^{n}[A, BC^n]&=&\{n+1\}_qC^{m+1}, \label{Eq[B,C^mA]}\\
      %(-\ad A)^n(C^mA)&=&C^mA((1-q^m)A-\{m\}_qs)^n, \mbox{and}\label{EqadA^nCmAUs} \\
    %(\ad B)^n(BC^m)&=&(1-q^m)^nB^{n+1}C^m. \label{EqadA^nBCmUs}
   %\end{eqnarray} 
%\end{proposition}
%We define $\scKii$ as the span of $ A, B, B^nC^m, C^mA^k\in\Us$, $(m, k \in \Z^+, n \in \N)$.

\begin{theorem}\label{TheIsoThm} The linear map $\LieIso:\LiesubUss\into\scKii$ is a Lie algebra isomorphism.
\end{theorem}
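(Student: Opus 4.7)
The plan splits the proof into two parts: verifying bijectivity, which is essentially built into the definition, and verifying preservation of the Lie bracket, which contains all the substance. For bijectivity, Theorem~\ref{TheThm} furnishes a basis for $\LiesubUss$, and $\LieIso$ sends this basis to the set of negatives of distinct basis elements of $\Us$ from Theorem~\ref{theobasisfor3gen}; since this image set spans $\scKii$ by definition and is linearly independent, $\LieIso$ is a linear isomorphism onto $\scKii$. The theorem therefore reduces to checking $\LieIso([X,Y]) = [\LieIso(X), \LieIso(Y)]$ for all pairs $X, Y$ of basis elements of $\LiesubUss$, and this verification simultaneously shows that $\scKii$ is a Lie subalgebra of $\Us$.

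The bracket-by-bracket check would follow the commutator table in the proof of Theorem~\ref{TheThm}. For the entries \eqref{4''}--\eqref{4}, which express the bracket as an explicit linear combination of basis elements of $\Uss$, the left-hand side $\LieIso([X,Y])$ is obtained by applying $\LieIso$ term-by-term using the defining formulas. The right-hand side $[\LieIso(X), \LieIso(Y)]$ is computed directly in $\Us$ using the relations \eqref{mainrel1Us}--\eqref{mainrel5Us} together with their generalizations \eqref{EqA^nC^mUs}, \eqref{EqC^mB^nUs}, and \eqref{EqAB^nUs}. The two sides match because the reordering identities in $\Uss$ and $\Us$ are formal mirrors of one another under the interchange of $A$ and $B$: for instance, $A^nC^m = q^{nm}C^mA^n$ in $\Uss$ corresponds to $C^mB^n = q^{mn}B^nC^m$ in $\Us$, and similarly the binomial identities \eqref{EqC^mB^ns1summationUr} and \eqref{EqA^nC^mUs} are swapped via the same interchange. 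A small representative check is $[C^u,A] = (1-q^u)C^uA$ in $\Uss$, which $\LieIso$ carries to $-(1-q^u)BC^u$, while $[\LieIso(C^u),\LieIso(A)] = [-C^u,-B]$ computed in $\Us$ via \eqref{EqC^mB^nUs} also yields $(q^u-1)BC^u$.

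The main obstacle is the three entries \eqref{2}, \eqref{6}, \eqref{7}, whose right-hand sides involve non-basis words such as $B^t C^u A$, $C^u A^w B$, and $C^u A^w B^y C^t$. For these, Lemmas~\ref{LemBCA} and \ref{LemCABC} are exactly the tools needed: they evaluate $\LieIso$ on such words by producing the corresponding words with $A$ and $B$ exchanged and their exponents swapped, up to a sign. On the other side, $[\LieIso(X), \LieIso(Y)]$ is expanded in $\Us$ using Theorem~\ref{theoA^WB^yC^kUs}, which is the $\Us$-counterpart of Theorem~\ref{theoC^kA^WB^yUr} used to derive \eqref{2}, \eqref{6}, \eqref{7} in the first place. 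The symmetry between these two theorems is precisely what guarantees equality of the two sides. Once every table entry is dispatched, $\LieIso$ preserves the Lie bracket on basis pairs, extends bilinearly to a Lie algebra homomorphism on all of $\LiesubUss$, and combined with the earlier bijectivity yields the desired Lie algebra isomorphism.
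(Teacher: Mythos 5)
Your proposal is correct and follows essentially the same route as the paper: bijectivity via the basis of Theorem~\ref{TheThm} and linear independence from Theorem~\ref{theobasisfor3gen}, then bracket preservation checked entry-by-entry against the commutator table, with \eqref{EqA^nC^mUs} and \eqref{EqC^mB^nUs} handling the straightforward entries and Lemmas~\ref{LemBCA} and \ref{LemCABC} (resting on Theorem~\ref{theoA^WB^yC^kUs}) handling \eqref{2}, \eqref{6}, \eqref{7}.
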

\begin{proof} An arbitrary linear combination of basis elements of $\LiesubUss$ from \eqref{basisforLUr} may be denoted by
\begin{eqnarray}
\alpha A+\beta B+\sum_n\gamma_nC^n+\sum_{u,v}\mu_{u,v}C^uA^v+\sum_{x,y}\nu_{x,y}B^xA^y,\nonumber
\end{eqnarray}
and its image under $\LieIso$ is 
\begin{eqnarray}
-\alpha B-\beta A-\sum_n\gamma_nC^n-\sum_{u,v}\mu_{u,v}B^vC^u-\sum_{x,y}\nu_{x,y}C^yA^x.\nonumber
\end{eqnarray}
If this linear combination is zero, then the linear independence of \eqref{basisforLUr} implies that all scalar coefficients are zero. Hence, $\ker\LieIso$ is zero. Any basis element of $\scKii$ is the image of some basis element of $\LiesubUss$, so the image of $\LieIso$ is equal to $\scKii$. Hence, $\LieIso$ is a bijective linear map. To prove that $\LieIso$ is a Lie algebra homomorphism, we show that $\LieIso$ preserves the Lie bracket of any two basis elements. To do this, we apply $\LieIso$ to both sides of the commutation relations \eqref{4''}--\eqref{7}. We first consider \eqref{4''}--\eqref{8}:

\begin{eqnarray}
\LieIso\lpar\left[C^u, C^t \right]\rpar&=&0=\lbrak-C^u ,-C^t\rbrak,\label{iso4''} \\
  \LieIso\lpar\left[C^u, A\right]\rpar &=&-(1-q^u)BC^u=\lbrak -C^u,-B\rbrak, \label{iso2'}  \\
\LieIso\lpar\left[ C^uA^w, A\right]\rpar &=&-(1-q^u)B^{w+1}C^u=\lbrak -B^wC^u,-B\rbrak,\label{iso5} \\
\LieIso\lpar  \left[C^uA^w, C^t \right]\rpar&=& -(q^{tw}- 1)B^wC^{t+u}=\lbrak -B^wC^u,-C^t\rbrak, \label{iso7'}  \\
\LieIso\lpar\left[C^uA^w, C^tA^v \right]\rpar&=&-(q^{tw}-q^{uv})B^{v+w}C^{t+u}=\lbrak -B^wC^u,-B^vC^t\rbrak, \label{iso8}
\end{eqnarray}
The second member of each of these equations is a linear combination $F$ of basis elements of $\scKii$, while the third member is a Lie bracket $\lbrak G,H\rbrak$ where $G$ and $H$ are negatives of basis elements of $\scKii$. The equation $F=\lbrak G,H\rbrak$ in each of the above equations (except the first which is trivial) was obtained by routine use of the identity  \eqref{EqC^mB^nUs}. Continuing with the equations \eqref{3'}--\eqref{4}, we have

\begin{eqnarray}
\LieIso\lpar\left[C^u, B \right]\rpar&=&-(q^u-1)C^uA-\{u\}_q rC^{u}=\lbrak -C^u,-A\rbrak, \label{iso3'}\\
\LieIso\lpar\left[B^tC^u, B \right]\rpar&=&-(q^u-1)C^uA^{t+1}-\{u\}_q rC^{u}A^t=\lbrak -C^uA^t,-A\rbrak, \label{iso3}\\
\LieIso\lpar  \left[C^u, B^wC^t \right]\rpar &=&\sum_{i=0}^{w}\binom{w}{i}q^{uw-ui}(\{u\}_qr)^iB^{w-i}C^{t+u}- B^wC^{t+u},\nonumber\\
&=&\lbrak-C^u,-C^tA^w\rbrak, \label{iso4'} \\
  \LieIso\lpar \left[B^vC^u, B^wC^t \right]\rpar  &=&\sum_{i=0}^{w}\binom{w}{i}q^{uw-ui}(\{u\}_qr)^iC^{t+u}A^{v+w-i} \nonumber \\
    &&-\sum_{i=0}^{v}\binom{v}{i}q^{tv-ti}(\{t\}_qr)^iC^{t+u}A^{w+v-i},\nonumber\\
&=&\lbrak-C^uA^v,-C^tA^w\rbrak,   \label{iso4} 
\end{eqnarray}
where the second and third members of the form $F=\lbrak G,H\rbrak$ are routine consequences of \eqref{EqA^nC^mUs}. For the last three equations \eqref{2}--\eqref{7}, we use Lemmas~\ref{LemBCA} and \ref{LemCABC} to obtain
\begin{eqnarray}
 \LieIso\lpar\left[ B^tC^u, A\right]\rpar &=&\LieIso\lpar B^tC^uA-\sum_{i=0}^{t}\binom{t}{i}q^{t+u-i}r^{i}B^{t-i}C^uA\rpar, \nonumber \\  
&=&-BC^uA^t+\sum_{i=0}^{t}\binom{t}{i}q^{t+u-i}r^{i}BC^uA^{t-i}, \nonumber \\  
&=&\sum_{i=0}^{t}\binom{t}{i}q^{t+u-i}r^{i}BC^uA^{t-i}-BC^uA^t, \nonumber \\  
%&=&\left[ C^uA^t, B\right], \nonumber \\  
&=&\left[-C^uA^t, -B\right], \label{iso2}\\
 \LieIso\lpar\left[ C^uA^w, B\right]\rpar&=&\LieIso\lpar C^uA^wB-BC^uA^w\rpar, \nonumber\\ 
 &=&-AB^wC^u+B^wC^uA, \nonumber\\ 
%  &=&B^wC^uA-AB^wC^u, \nonumber\\ 
% &=&\left[ B^wC^u, A\right] , \nonumber \\
  &=&\left[ -B^wC^u, -A\right] ,\label{iso6}\\
\LieIso\lpar\left[C^uA^w, B^yC^t \right]\rpar&=& \LieIso\lpar C^uA^wB^yC^t- B^yC^{t+u}A^w\rpar, \nonumber \\
&=& -C^tA^yB^wC^u+ B^wC^{t+u}A^y, \nonumber \\
%&=&  B^wC^{t+u}A^y-C^tA^yB^wC^u, \nonumber \\
%&=&  B^wC^{u}C^{t}A^y-C^tA^yB^wC^u, \nonumber \\
%&=&\left[ B^wC^{u}, C^{t}A^y\right], \nonumber \\
&=&\left[ -B^wC^{u}, -C^{t}A^y\right]. \label{iso7}  
\end{eqnarray}
Using the definition of $\LieIso$, the last part or member of each of \eqref{4''}--\eqref{iso7} may be written as a Lie bracket of the form $\lbrak\LieIso(X),\LieIso(Y)\rbrak$ where $X$ and $Y$ are basis elements of $\LiesubUss$. By further inspection, \eqref{iso4''}--\eqref{iso7} imply that for any basis elements $X$ and $Y$ of $\LiesubUss$, we have $\LieIso\lpar\lbrak X,Y\rbrak\rpar=\lbrak\LieIso(X),\LieIso(Y)\rbrak$. This completes the proof.
\end{proof}

\begin{corollary}\label{theCor} The Lie algebras $\LiesubUss$ and $\LiesubUs$ are isomorphic.
\end{corollary}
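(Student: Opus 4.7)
The corollary is nearly immediate from Theorem~\ref{TheIsoThm}, which already supplies a Lie algebra isomorphism $\LieIso:\LiesubUss\longrightarrow\scKii$. What remains is to identify the codomain $\scKii$, defined in the paper merely as the span of the words $A$, $B$, $B^nC^m$, $C^mA^k$ in $\Us$, with the Lie subalgebra $\LiesubUs$ generated by $A,B\in\Us$. So my plan is to prove the set equality $\scKii=\LiesubUs$, and then to invoke Theorem~\ref{TheIsoThm}.

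For the containment $\LiesubUs\subseteq\scKii$, the observation is that $\scKii$, being the image of the Lie algebra $\LiesubUss$ under the Lie homomorphism $\LieIso$ (by Theorem~\ref{TheIsoThm}), is itself a Lie subalgebra of $\Us$. Moreover $\LieIso(B)=-A$ and $\LieIso(A)=-B$ belong to $\scKii$, so $A,B\in\scKii$. Since $\LiesubUs$ is by definition the smallest Lie subalgebra of $\Us$ containing $A$ and $B$, this forces $\LiesubUs\subseteq\scKii$.

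For the reverse containment $\scKii\subseteq\LiesubUs$, I would imitate, in the algebra $\Us$, the chain of computations carried out for $\Ur$ in Lemma~\ref{lemmaC^minLie}, Proposition~\ref{PropoelementsofLie1Ur}, and Lemma~\ref{lemmaadA(CmA&BCmUr}. The relevant reordering identities \eqref{mainrel1Us}--\eqref{mainrel5Us}, \eqref{EqA^nC^mUs}, \eqref{EqC^mB^nUs}, and \eqref{EqAB^nUs} are strictly parallel to their $\Ur$ counterparts, so the same sequence of nested commutators shows in turn that each $C^m$, then each $BC^m$ and $C^mA$, and finally each $C^mA^k$ and $B^nC^m$ lies in $\LiesubUs$. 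Together with $A,B\in\LiesubUs$, this puts every spanning element of $\scKii$ inside $\LiesubUs$.

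I expect the main obstacle to be bookkeeping rather than substance: one has to be careful that the asymmetry between $r$ and $s$ in the presentations of $\Ur$ and $\Us$ does not spoil the parallel arguments, in particular when applying $(\ad C)^m$ to $A$ versus to $B$, where the roles of the two generators swap relative to the $\Ur$ case. Once $\scKii=\LiesubUs$ is established, Theorem~\ref{TheIsoThm} gives $\LiesubUss\cong\LiesubUs$ as Lie algebras, completing the proof.
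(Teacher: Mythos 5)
Your proposal is correct, and its skeleton --- prove $\scKii=\LiesubUs$ and then invoke Theorem~\ref{TheIsoThm} --- is exactly the paper's; the containment $\LiesubUs\sub\scKii$ is argued the same way in both (the image of a Lie algebra homomorphism is a Lie subalgebra of $\Us$, it contains $A$ and $B$, so minimality of $\LiesubUs$ applies). Where you genuinely differ is the reverse containment $\scKii\sub\LiesubUs$. You propose to redo, inside $\Us$, the nested-bracket computations of Proposition~\ref{nestAdProp}, Lemma~\ref{lemmaC^minLie}, Proposition~\ref{PropoelementsofLie1Ur} and Lemma~\ref{lemmaadA(CmA&BCmUr} using the reorderings \eqref{mainrel1Us}--\eqref{mainrel5Us}, \eqref{EqA^nC^mUs}, \eqref{EqC^mB^nUs}; this does work, and the asymmetry you flag is harmless because the roles simply swap: in $\Us$ one gets the clean identities $(\ad C)^mB=(q-1)^mBC^m$ and $[B^nC^m,B]=(q^m-1)B^{n+1}C^m$, while $(\ad C)^mA$ and $[C^mA^n,A]=(1-q^m)C^mA^{n+1}-\{m\}_qsC^mA^n$ acquire extra $s$-terms that can be absorbed since $1-q^m\neq 0$ by Assumption~\ref{qFassume}, and $q^m[A,BC^m]=\{m+1\}_qC^{m+1}$ plays the role of Lemma~\ref{lemmaC^minLie}. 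The paper avoids this second round of computation entirely: it notes that each basis word of $\LiesubUss$ is already expressible as a Lie polynomial in $A,B\in\Uss$ (by the cited $\Ur$-results with $r=s$), and then pushes those expressions forward through the Lie algebra homomorphism $\LieIso$, so every basis word of $\scKii$ is a Lie polynomial in $\LieIso(A)=-B$ and $\LieIso(B)=-A$, which lie in $\LiesubUs$. Your route is more self-contained and makes the $\Us$-side structure explicit, at the cost of repeating the inductive bracket identities; the paper's route gets the containment essentially for free from the homomorphism property established in Theorem~\ref{TheIsoThm}.
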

\begin{proof} In view of Theorem~\ref{TheIsoThm}, showing that $\scKii$ is isomorphic to $\LiesubUs$ shall suffice. By definition, $\scKii$ contains $A,B\in\Us$. Also from Theorem~\ref{TheIsoThm}, $\scKii$ is equal to the image of a Lie algebra homomorphism so it is a Lie subalgebra of $\Us$. What remains to be shown is $\scKii\sub\LiesubUs$. Using Proposition~\ref{nestAdProp} to Lemma~\ref{lemmaadA(CmA&BCmUr}, each word in the basis
\begin{eqnarray}
    A, B, B^nC^m, C^mA^k\in\Uss,\qquad (m, k \in \Z^+, n \in \N), \nonumber
\end{eqnarray}
 for $\LiesubUss$ may be expressed as a Lie polynomial in $A,B\in\Uss$. By Theorem~\ref{TheIsoThm}, $\LieIso$ is a Lie algebra homomorphism, so each of the corresponding words 
\begin{eqnarray}
    A, B, B^nC^m, C^mA^k\in\Us,\qquad (m, k \in \Z^+, n \in \N), \nonumber
\end{eqnarray}
that form a basis for $\scKii$ may be expressed as a Lie polynomial in $\LieIso(A)$ and $\LieIso(B)$. Since $\LieIso(A)=-B$ and $\LieIso(B)=-A$ are both in $\LiesubUs$, we have $\scKii\sub\LiesubUs$. This completes the proof.
\end{proof}

In conclusion, by comparison of Corollaries~\ref{noIsoCor} and \ref{theCor}, we find here an insight on one key difference between the algebra structure, generated by $A$ and $B$, versus the Lie algebra structure (associative structure versus nonassociative structure), under the $q$-deformed commutation relations that we have considered. The traditional algebra homomorphisms that perform $A\mapsto\beta B$ and $B\mapsto \alpha A$ (for some $\alpha,\beta\in\F$), or algebra homomorphisms that ``switch and scale'' the two generators, have no $q$-analogs for the associative structures, but there is one Lie algebra homomorphism of this kind that relates the Lie algebras $\LiesubUss$ and $\LiesubUs$. Furthermore, these two Lie algebra structures are isomorphic, whereas the corresponding associative structures are not. From the standpoint of this study, this kind of insight that contrasts the associative and nonassociative algebraic structures in the same space is one important result that comes from studying Lie polynomial characterization problems.

\end{document}